\newcommand{\N}{\ensuremath{\mathbb{N}}}
\newcommand{\Z}{\ensuremath{\mathbb{Z}}}
\newcommand{\Q}{\ensuremath{\mathbb{Q}}}
\newcommand{\C}{\ensuremath{\mathbb{C}}}
\newcommand{\F}{\ensuremath{\mathbb{F}}}
\theoremstyle{plain}
\newtheorem{theorem}{Theorem}[section]
\newtheorem{lemma}[theorem]{Lemma}
\newtheorem{corollary}[theorem]{Corollary}
\newtheorem{conjecture}[theorem]{Conjecture}
\newtheorem*{assumption1}{Assumption (A1)}
\newtheorem*{assumption2}{Assumption (A2)}
\theoremstyle{definition}
\newtheorem{remark}[theorem]{Remark}
\newtheorem{example}[theorem]{Example}
\DeclareMathOperator{\Spec}{Spec}
\DeclareMathOperator{\Pic}{Pic}
\DeclareMathOperator{\Cl}{Cl}
\DeclareMathOperator{\rank}{rk}
\DeclareMathOperator{\Disc}{Disc}
\DeclareMathOperator{\divisor}{div}
\DeclareMathOperator{\Sel}{Sel}
\DeclareMathOperator{\PGL}{PGL}
\DeclareMathOperator{\GL}{GL}
\DeclareMathOperator{\ord}{ord}
\DeclareMathOperator{\orb}{orb}
\newcommand{\PP}{\mathbb{P}}
\newcommand{\A}{\mathbb{A}}
\begin{document}

\title{Galois covers of $\PP^1$ and number fields with large class groups\footnote{MSC classes: 11R29 (Primary) 11R16 (Secondary)}}

\author{Jean Gillibert \and Pierre Gillibert\footnote{The second author was supported by Austrian Science Foundation FWF, project P32337.}}

\date{September 2021}

\maketitle

\begin{abstract}
For each finite subgroup $G$ of $\PGL_2(\Q)$, and for each integer $n$ coprime to $6$, we construct explicitly infinitely many Galois extensions of $\Q$ with group $G$ and whose ideal class group has $n$-rank at least $\#G-1$. This gives new $n$-rank records for class groups of number fields.
\end{abstract}




\section{Introduction}


If $M$ is a finite abelian group, and if $m>1$ is an integer, we define the $m$-rank of $M$ to be the maximal integer $r$ such that $(\Z/m\Z)^r$ is a subgroup of $M$; we denote it by $\rank_m M$. If $K$ is a number field, we denote by $\Cl(K)$ the ideal class group of $K$.

Our motivation for the present paper is the following conjecture on class groups of number fields, which belongs to folklore, and is a consequence of the Cohen-Lenstra heuristics.

\begin{conjecture}
\label{conj1}
Let $d>1$ and $n>1$ be two integers. Then $\rank_n\Cl(K)$ is unbounded when $K$ runs through the number fields of degree $[K:\Q]=d$. 
\end{conjecture}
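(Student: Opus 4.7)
The plan is complicated by the fact that Conjecture~\ref{conj1} is wide open---it is folklore, implied by the Cohen--Lenstra heuristics, and unconditionally known only in very restricted regimes. What I would do, then, is not actually prove it but set up the geometric machinery that the present paper's approach uses to attack its partial version, and flag where the obstruction to the full conjecture sits.

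Fix $d > 1$ and $n > 1$. First I would translate the arithmetic problem into geometry: for each target rank $r$, aim to produce a smooth geometrically irreducible curve $C_r$ over $\Q$ with a finite map $\pi_r \colon C_r \to \PP^1_\Q$ of degree $d$, together with $r$ rational functions $f_1,\dots,f_r \in \Q(C_r)^\times$ whose divisor classes span an $\F_n$-subspace of rank $r$ in $\Pic(C_r)\otimes\F_n$. The fibre $\pi_r^{-1}(t) = \Spec(K_t)$ over a generic $t \in \PP^1(\Q)$ is a degree-$d$ étale $\Q$-algebra, and one expects each $[f_i]$ to specialize to a nontrivial $n$-torsion class in $\Cl(K_t)$. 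The second step applies Hilbert irreducibility to confine $t$ to a set on which $K_t$ is actually a field of the right degree and Galois type, and invokes a class-group specialization lemma (of the sort due to Madan--Nakano and central to the method of this paper) to transfer the $r$ independent divisor classes on $C_r$ to $r$ independent classes in $\Cl(K_t)/n$, modulo a kernel controlled by the ramification of $\pi_r$ and by the units of $K_t$. Letting $r \to \infty$ would then complete the proof.

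The main obstacle lies precisely in producing $C_r$ with $r$ tending to infinity while $d$ remains fixed. Controlling the specialization kernel essentially forces one to work with covers carrying a large automorphism group, and in the Galois-cover framework of this paper that group $G$ must embed in $\PGL_2(\Q)$ (to ensure both descent to $\Q$ and a rational base $\PP^1/G$), so $\#G$ is absolutely bounded and the method caps at $r = \#G - 1$. Breaking past this cap for a \emph{fixed} degree $d$---which is what the conjecture requires---seems to demand either a weakening of the symmetry constraint (non-Galois covers, or covers descending only after a twist), or an altogether different source of $n$-torsion (for instance, Heegner-style points on Shimura varieties, or Jacobians with unboundedly large Mordell--Weil rank specialized via Kummer theory). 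Finding any such construction that works uniformly in both $d$ and $n$ is the essential difficulty, and is why the conjecture remains open.
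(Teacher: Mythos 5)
This statement is a conjecture, not a theorem: the paper offers no proof of it, and indeed explicitly records that when $n$ and $d$ are coprime not a single case of it is known. Your refusal to ``prove'' it is therefore the correct move, and your description of the machinery is an accurate sketch of what the paper actually does for its partial result (Theorem~\ref{thm:main}): build a curve $C$ with many independent $n$-torsion divisor classes represented by functions with $n$-divisible divisors (Lemma~\ref{lem:Picard}), then specialize via a quantitative Hilbert irreducibility argument combined with the Selmer-group identity \eqref{eq:SelRank} (Theorem~\ref{thm:specialization}), losing $\rank_\Z \mathcal{O}_K^\times$ in the process. Your diagnosis of the obstruction is also the right one: in this framework the rank obtained is capped by $\#\orb(a)+\#\orb(b)-2$ minus the unit rank, and since the relevant group must be a finite subgroup of $\PGL_2(\Q)$ (of order at most $12$), one cannot let $r\to\infty$ at fixed degree $d$; that is precisely why the paper proves only $\rank_n\Cl(K)\geq \#G-1$ and leaves Conjecture~\ref{conj1} open. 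Nothing further can be asked of a blind attempt here.
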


When $n=d$, and more generally when $n$ divides $d$, this conjecture follows easily from class field theory \cite{brumer65,RZ}.
On the other hand, when $n$ and $d$ are coprime, there is not a single case where Conjecture~\ref{conj1} is known to hold. For a survey of known results, see \cite{gl20}.

It was proved by Nakano \cite{Nakano84, Nakano85} that, given $n>1$ and $(r_1,r_2)\in \N^2$, there exist infinitely many number fields $K$ with $r_1$ real places and $r_2$ complex places such that
\begin{equation*}
\label{eqN1}
\rank_n \Cl(K)\geq r_2+1.
\end{equation*}

This is currently the best known result for general $(r_1,r_2)$ and $n$. To our knowledge, the only improvements to this general bound are for $(r_1,r_2)=(3,0)$ \cite{Nakano86}, or for specific values of $n$; in particular, a better bound was obtained by Levin \cite{levin07} in the case when $d\geq n^2$.

In the present paper, we improve on Nakano's inequality in the case when
$$
(r_1,r_2)= (4,0), (6,0), (0,3), (0,4) ~\text{and}~ (0,6)
$$
by constructing fields $K$ which are in addition Galois extensions of $\Q$. Our strategy is closely related to the ``geometric'' techniques developed in \cite{gl12} and \cite{bg18}, an overview of whose is given in \cite{gl20}. The main new ingredient of our work is the construction of specific Galois covers of $\PP^1_\Q$ from finite subgroups of $\PGL_2(\Q)$. 

Our main result is as follows:

\begin{theorem}
\label{thm:main}
Let $G\leq \PGL_2(\Q)$ be a finite subgroup. Let $n$ be an integer coprime to $6$. Then there exist infinitely many isomorphism classes of number fields $K$ such that
\begin{enumerate}
\item[(1)] $K/\Q$ is Galois with group $G$;
\item[(2)] $\rank_n \Cl(K) \geq \#G - 1$.
\end{enumerate}
\end{theorem}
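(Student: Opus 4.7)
The plan is to combine the geometric method of \cite{gl12, bg18} with an explicit construction of $G$-Galois covers of $\PP^1_\Q$ afforded by the embedding $G \hookrightarrow \PGL_2(\Q)$. Since $G$ acts on $\PP^1_\Q$ by Möbius transformations, the quotient $\pi \colon \PP^1_\Q \to \PP^1_\Q/G$ is a $G$-Galois cover of $\PP^1$'s: the quotient is a smooth genus-zero curve with a $\Q$-point, hence isomorphic to $\PP^1_\Q$, so $\pi$ is realized by $t \mapsto s = f(t)$ for some $G$-invariant $f \in \Q(t)$ of degree $\#G$. By Hilbert irreducibility, for $y_0 \in \Q$ outside a thin set, the fiber $\pi^{-1}(y_0)$ equals $\Spec K$ for a Galois $G$-extension $K/\Q$. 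The first concrete task is to record an explicit such $f$ for each finite subgroup of $\PGL_2(\Q)$.

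To produce $n$-torsion classes in $\Cl(K)$, I would fix a rational point $Q \in \PP^1_\Q(\Q)$ with trivial $G$-stabilizer, and consider the $\#G$ elements $\alpha - g \cdot Q \in K$ for $g \in G$, where $\alpha$ denotes the canonical root of $f(t) - y_0$ in $K$. For primes $p$ such that the reduction of $\alpha$ modulo one prime of $K$ above $p$ coincides with $Q \bmod p$, each $\alpha - gQ$ is divisible by exactly one prime ideal $\mathfrak{p}_g$ of $O_K$ above $p$, yielding a natural bijection $g \mapsto \mathfrak{p}_g$. By choosing $y_0$ in a parametric family depending on $\#G - 1$ auxiliary rational parameters, I would arrange that $(\alpha - gQ) = \mathfrak{p}_g^n$ in $O_K$ for each $g$; this forces $n \cdot [\mathfrak{p}_g] = 0$ in $\Cl(K)$, while the tautological relation $\sum_g [\mathfrak{p}_g] = [(p)] = 0$ leaves $\#G - 1$ candidate classes in $\Cl(K)[n]$.

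The decisive step is to show that these $\#G - 1$ classes are $\Z/n\Z$-linearly independent in $\Cl(K)/n$, or equivalently that the $\Z[G]$-module they generate modulo $n$ is isomorphic to the augmentation ideal $I_G \otimes \Z/n\Z$, where $I_G = \ker(\Z[G] \to \Z)$. Following \cite{bg18}, I would reduce this to showing that a specialization map from $I_G \otimes \Z/n\Z$ into $\Cl(K)/n$ is injective for $y_0$ outside a thin subset, via a Hilbert-irreducibility argument applied to the cover obtained from $\pi$ by adjoining $n$-th roots of the functions $\alpha - gQ$. The hypothesis $\gcd(n, 6) = 1$ enters because every element order occurring in $\PGL_2(\Q)$ --- and hence every ramification index of $\pi$ --- is coprime to $n$ under this assumption, which keeps the Kummer cover linearly disjoint from the ramification of $\pi$. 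Infinitely many non-isomorphic $K$ then follow from the parameter family, since the discriminant of $K$ grows unboundedly.

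The main obstacle is the injectivity of the specialization map in the third step. Although the generic rank is clearly $\#G - 1$, controlling the specialization over infinitely many $y_0$ requires carefully invoking Hilbert irreducibility for the combined Galois/Kummer cover and analyzing the local factorizations at the auxiliary prime $p$ in order to ensure that the $\mathfrak{p}_g$ behave as predicted for each member of the parametric family.
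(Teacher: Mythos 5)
Your overall architecture (an explicit $G$-cover of $\PP^1$ coming from the $\PGL_2(\Q)$-action, an auxiliary Kummer-type cover, Hilbert irreducibility to control specializations) matches the paper's, but there is a counting gap at your ``decisive step'' that the argument as described cannot close. You extract all candidate classes from the single orbit of $Q$: the $\#G$ elements $\alpha-gQ$, subject to one multiplicative relation, give at most $\#G-1$ independent classes in the group $\Sel^n(K)$ of elements whose valuations are all divisible by $n$. A Hilbert-irreducibility argument applied to the cover obtained by adjoining $n$-th roots of the $\alpha-gQ$ can indeed show that these $\#G-1$ Selmer classes stay independent for most specializations, but it cannot show that they remain independent in $\Cl(K)/n$: the kernel of $\Sel^n(K)\to\Cl(K)[n]$ is $\mathcal{O}_K^\times/(\mathcal{O}_K^\times)^n$, of $n$-rank $r_1+r_2-1$ by Dirichlet. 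So this method only yields $\rank_n\Cl(K)\ge(\#G-1)-(r_1+r_2-1)$, which is $0$ in the totally real cases ($\Z/4\Z$, $\Z/6\Z$, real $D_2$) and $\#G/2$ in the totally imaginary ones --- well short of $\#G-1$. No choice of $y_0$ within a parametric family avoids this loss; generically the units really do absorb that many classes, which is why the specialization theorem carries the term $-\rank_\Z\mathcal{O}^\times$.

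The paper's fix is to use \emph{two} orbits. Besides $\orb(a)$ (your $\orb(Q)$) it takes $b=0$, whose orbit contains $\infty$; the functions $x-c$ for $c\in\orb(0)$ specialize to units under a congruence condition on $y$, and together with one extra function $(x+1)^2/(\zeta+\bar\zeta+2)$ in the even-order cases they supply $\#\orb(a)+\#\orb(0)-2$ independent classes in $\Sel^n(K)$ --- exactly $(\#G-1)+(r_1+r_2-1)$ once the signature is pinned down --- so that subtracting the unit rank leaves $\#G-1$. To salvage your approach you would need to add this second orbit's worth of unit-valued functions (or some other mechanism cancelling the unit contribution); it is not automatic. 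A smaller inaccuracy: the hypothesis $\gcd(n,6)=1$ is not forced by the ramification indices of $\pi$ (the chosen $a$ has trivial stabilizer and $b=0$ has stabilizer of order at most $2$), but by the solvability of the congruences $y^n\equiv t_0\pmod m$ needed to make the specialized functions integral and the relevant values units.
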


The list of finite subgroups of $\PGL_2(\Q)$ is well known \cite[Prop.~1.1]{Beauville2010}:
\begin{equation}
\label{eq:grouplist}
\Z/2\Z,\quad \Z/3\Z,\quad \Z/4\Z,\quad \Z/6\Z,\quad D_2=(\Z/2\Z)^2,\quad D_3=\mathfrak{S}_3,\quad D_4,\quad D_6,
\end{equation}
where $D_r$ denotes the dihedral group of order $2r$.

The condition that $n$ is coprime to $6$ in Theorem~\ref{thm:main} can be relaxed depending on $G$. We sum up our results for each group $G$ in Table~\ref{table1}, with relevant references in the case of quadratic and cubic extensions, which were already known.

The proof of Theorem~\ref{thm:main} follows from a general strategy, and ends up with a case-by-case analysis. For each finite subgroup $G\leq \PGL_2(\Q)$, we give an explicit family of fields satisfying the conclusions of Theorem~\ref{thm:main}, and we count isomorphism classes of these fields, ordered by discriminant.

\begin{table}[ht]
\label{table1}
\caption{Table of $G$ and $n$ for which there exist infinitely many Galois extensions $K/\Q$ with group $G$ such that $\rank_n \Cl(K) \geq \#G - 1$. All these are current $n$-rank records, except for imaginary quadratic (resp. biquadratic) extensions, where the $n$-rank 2 (resp. 4) can be achieved, as was shown by Yamamoto \cite{Yamamoto70}.}
\bigskip
\centering
\begin{tabular}{|c|c|c|l|}
\hline
$G$ & $n$ & signature & author(s) \\
\hline
$\Z/2\Z$ & any & imaginary & Ankeny-Chowla 1955 \cite{AC55} \\
         & any & real & Yamamoto 1970 \cite{Yamamoto70} \\
         &     &      & Weinberger 1973 \cite{Weinberger73} \\
\hline
$\Z/3\Z$ & any & real & Nakano 1986 \cite{Nakano86} \\
\hline
$\Z/4\Z$ & odd & real & \\
\hline
$\Z/6\Z$ & coprime to $6$ & real & \\
\hline
$D_2=(\Z/2\Z)^2$ & any & imaginary & Yamamoto 1970 \cite{Yamamoto70} \\
                 & any & real & \\
\hline
$D_3=\mathfrak{S}_3$ & odd & imaginary & \\
\hline
$D_4$ & coprime to $6$ & imaginary & \\
\hline
$D_6$ & coprime to $6$ & imaginary & \\
\hline
\end{tabular}
\end{table}

\begin{example}
The splitting field of the polynomial
$$
x^6 + 3x^5 + 24829767x^4 + 49659529x^3 + 24829767x^2 + 3x + 1
$$
(obtained by specializing our polynomial $D_3P$ in \S{}\ref{sec:D3} at $y^n=19^7$) is a totally imaginary Galois extension of $\Q$ with group $\mathfrak{S}_3$. One computes with Pari/GP \cite{PARI2} that its ideal class group has $42$-rank exactly $5$.
\end{example}


\subsection*{Acknowledgements}

We thank Florence Gillibert and Gabriele Ranieri for inspiring email exchanges throughout the writing of this paper.
This work originated while the authors were visiting the Institut de Math{\'e}matiques de Vimont during the winter semester break 2019/2020.

Finally, this work would not have seen the light of day without the use of the Pari/GP software \cite{PARI2}, which allowed us to carry out various numerical experiments.


\section{Galois covers of $\PP^1$ whose Picard group has large $n$-rank}


Throughout this paper, by ``curve'' we mean a smooth projective geometrically irreducible curve defined over some field $k$.

Let $G$ be a finite group. A Galois cover of curves with group $G$ is a finite surjective morphism $\phi:C_1\to C_2$ of curves, together with an action of $G$ on $C_1$ under which $\phi$ is invariant, such that $C_1\to C_2$ is generically an \'etale $G$-torsor (i.e. $k(C_1)/k(C_2)$ is a Galois extension with group $G$).

\begin{lemma}
\label{lem:Galoiscover}
Let $k$ be a field. Let $G\leq \PGL_2(k)$ be a finite subgroup.
Let $a,b \in\PP^1(k)$ be two points which do not lie in the same orbit under the action of $G$. Then the map
$$
h:\PP^1\to \PP^1; \quad x \mapsto \prod_{\sigma\in G} \frac{x-\sigma(a)}{x-\sigma(b)}
$$
is a (geometrically connected) Galois cover with group $G$.
\end{lemma}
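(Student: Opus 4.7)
The plan is to compute the divisor of $h$ as a rational function on $\PP^1$, observe that it is $G$-invariant of degree zero, and use that $\PP^1/G\cong \PP^1$ (L\"uroth) to identify $h$ with the quotient map up to an automorphism of the target.

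First I compute the divisor. After possibly conjugating by an element of $\PGL_2(k)$ so that $\infty\notin G\cdot a\cup G\cdot b$, each factor $x-\sigma(a)$ has divisor $[\sigma(a)]-[\infty]$, and the contributions at $\infty$ cancel between numerator and denominator. Hence
$$
\divisor(h)=\sum_{\sigma\in G}[\sigma(a)] - \sum_{\sigma\in G}[\sigma(b)].
$$
Because $b\notin G\cdot a$, these two orbit sums are supported on disjoint sets, so $h$ is non-constant and reading off the zero divisor shows $\deg h=\#G$.

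Next, I recognise this divisor as a pullback from the quotient. By L\"uroth's theorem the quotient $\PP^1/G$ is again a $\PP^1$; let $\pi\colon\PP^1\to\PP^1/G$ be the quotient map, a Galois cover of degree $\#G$. In a Galois cover the ramification index at a point equals the order of its stabilizer, and stabilizers are conjugate along a fibre, so
$$
\sum_{\sigma\in G}[\sigma(a)]=|\mathrm{Stab}_G(a)|\cdot\sum_{p\in G\cdot a}[p]=\pi^*[\pi(a)],
$$
and similarly for $b$. Thus $\divisor(h)=\pi^*\bigl([\pi(a)]-[\pi(b)]\bigr)$. Since $[\pi(a)]-[\pi(b)]$ has degree zero on $\PP^1/G=\PP^1$, it is principal, equal to $\divisor(g)$ for some $g\in k(\PP^1/G)$. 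Therefore $h$ and $\pi^* g$ share the same divisor and differ by a scalar $c\in k^\times$, i.e.\ $h=c\cdot\pi^* g$; in particular $h$ is $G$-invariant and factors through $\pi$.

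Writing $h=\tilde h\circ\pi$ and comparing degrees ($\#G=\deg h=\deg\tilde h\cdot\deg\pi=\deg\tilde h\cdot\#G$) gives $\deg\tilde h=1$, so $\tilde h$ is an automorphism of $\PP^1$. Hence $h$ is the quotient $\pi$ post-composed with an automorphism of the target, and is itself a Galois cover with group $G$; geometric connectedness is automatic since the source $\PP^1_k$ is geometrically connected.

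The main obstacle is the pullback identity in the second step: each point in the orbit $G\cdot a$ appears in $\prod_{\sigma\in G}(x-\sigma(a))$ with multiplicity $|\mathrm{Stab}_G(a)|$, which is exactly the ramification index of $\pi$ at that point, so the orbit-power sums built into the formula already match $\pi^*$ of a single point class. The hypothesis $b\notin G\cdot a$ is used precisely to prevent cancellation between numerator and denominator, ensuring $h$ is non-constant of degree $\#G$.
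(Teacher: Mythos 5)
Your proof is correct, and while it shares its endpoints with the paper's argument, the middle step is genuinely different. Both proofs start from $\divisor(h)=\sum_{\sigma}[\sigma(a)]-\sum_{\sigma}[\sigma(b)]$ and both finish by a degree count identifying $h$ with the quotient map $\PP^1\to\PP^1/G\simeq\PP^1$. To get $G$-invariance, the paper stays on the source: from $\divisor(h\circ\tau)=\divisor(h)$ it deduces $h\circ\tau=\mu_\tau h$ for a scalar $\mu_\tau$, and then kills the scalar by evaluating at a fixed point of $\tau$ in $\PP^1(\overline{k})$ (with a small normalisation when that fixed point is a zero or pole of $h$). You instead recognise $\divisor(h)$ as $\pi^*\bigl([\pi(a)]-[\pi(b)]\bigr)$ --- the orbit sums already carry the multiplicities $|\mathrm{Stab}_G(\cdot)|$, which are exactly the ramification indices of $\pi$ --- so that $h=c\cdot\pi^*g$ and invariance is automatic, with no fixed-point argument. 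Your route needs a bit more machinery up front (L\"uroth, the ramification theory of $\pi$, and principality of degree-zero divisors on $\PP^1/G$; for the last point it is worth saying explicitly that $\pi(a)$ is a $k$-rational point, so the quotient is honestly $\PP^1_k$ and not merely a genus-zero curve), but it is more structural and would generalise to quotients of higher-genus curves, where one cannot rely on every $\tau$ having a fixed point. Two minor presentational points: the lemma's convention that a factor $x-\infty$ equals $1$ already yields the divisor formula directly, the missing linear factors being compensated by the order of $h$ at $\infty$, so the preliminary conjugation is unnecessary --- and if you do conjugate, you should note that the conjugated product differs from $h\circ\gamma^{-1}$ only by a scalar; and you implicitly use that $G$ acts faithfully on $\PP^1$ so that $k(\PP^1)/k(\PP^1)^G$ is Galois with group $G$, which deserves a word (the paper records the equivalent fact that $G$ acts freely outside finitely many points).
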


In the statement above, and in the rest of the paper, we fix an embedding $\A^1 \to \PP^1$ and identify rational functions on $\A^1$ and on $\PP^1$. In particular, given $x_0\in k$, the function $x-x_0$ has a unique zero at $x_0$ and a unique pole at $\infty$. We extend this property by declaring that the expression $x-\infty$ is the constant function equal to $1$. Thus, the rational function $h$ above is well-defined even if the orbit of $a$ or that of $b$ contains $\infty$.

If $G\leq \PGL_2(k)$ is a finite subgroup and $x_0\in \PP^1(\overline{k})$ is a point, we denote by $\orb(x_0)$ its orbit under the action of $G$, a slight abuse of notation.

\begin{proof}
We note that $h$ is a rational map whose divisor is
$$
\divisor(h)= \sum_{\sigma\in G} \sigma(a) - \sum_{\sigma\in G} \sigma(b)
$$

Now, for any $\tau\in G$, $h\circ\tau$ is a rational map, and
$$
\divisor(h\circ\tau) = \tau^{-1}(\divisor(h)) = \divisor(h)
$$
because  $\divisor(h)$ is invariant under the action of $\tau^{-1}$, which is an element of $G$. We know that there exists, up to a multiplicative constant, a unique rational map $\PP^1\to \PP^1$ with given divisor. Therefore, there exist $\mu_\tau\in k^\times$ such that $h\circ\tau=\mu_\tau h$. But we know that $\tau$ has at least one fixed point $x_0$ in $\PP^1(\overline{k})$. If this point $x_0$ is neither a zero nor a pole of $h$, then by evaluating $h\circ\tau$ at $x_0$ we see that $\mu_\tau=1$. If $x_0$ is a zero or a pole of $h$, then we let $r:=\ord_{x_0}(h)$ and the same conclusion holds by evaluating at $x_0$ the equality
$$
(x-x_0)^{-r}h(\tau(x)) = \mu_\tau (x-x_0)^{-r}h(x).
$$

This proves that $h:\PP^1\to \PP^1$ is invariant under the action of $G$. Moreover, $h$ is finite of degree $\#G$ by construction. Therefore, $h:\PP^1\to \PP^1$ induces an isomorphism $\PP^1\simeq \PP^1/G$ (quotient with respect to the fppf topology). Moreover, $G$ being finite, it acts freely on $\PP^1$ outside finitely many points, hence $h$ is generically a $G$-torsor. As $G$ is constant, $G$-torsors for the \'etale and the fppf topology coincide. We conclude that $h$ is a Galois cover with group $G$.
\end{proof}

\begin{remark}
The ramification points of $h$ are exactly the points of $\PP^1$ with non-trivial stabilizer under the action of $G$.
\end{remark}

\begin{remark}
Another way to state the conclusion of Lemma~\ref{lem:Galoiscover} is the following: the splitting field over $k(t)$ of the polynomial
\begin{equation}
\label{eq:Rt}
R_t := \prod_{\sigma\in G} (x-\sigma(a)) - t\prod_{\sigma\in G} (x-\sigma(b))
\end{equation}
is a regular Galois extension of $k(t)$ with group $G$.

While constructions of Galois extensions of $k(t)$ from finite subgroups of $\PGL_2(k)$ are classical \cite[Chap.~1]{Serretopics}, the way we achieve this from the orbits of two rational points does not seem to appear in the literature. It follows from \cite[\S{}1.1]{Serretopics} that, when $G \simeq \Z/3\Z$ (more generally, any group of odd order), or $G$ is the subgroup induced by the standard representation $\mathfrak{S}_3\to \GL_2(k)$, the fields obtained are generic, whatever the value of $(a,b)$ is. When $G=\Z/4\Z$ and $\sqrt{-1} \notin k$, there is no generic equation over $k(t)$ \cite[\S{}1.2]{Serretopics}, so our construction for different values of $(a,b)$ may lead to distinct families of fields.
\end{remark}

Combining Lemma~\ref{lem:Galoiscover} with Hilbert's irreducibility theorem yields the following:

\begin{corollary}
\label{cor:galoispol}
Assume $k$ is a number field, and let $G$, $a$ and $b$ satisfying the assumptions of Lemma~\ref{lem:Galoiscover}. Then for all $m\in k$ outside a thin set, the polynomial $R_m$ obtained by specialization of \eqref{eq:Rt} is irreducible, and its splitting field is a Galois extension of $k$ with group $G$. Moreover, the action of $G$ on its roots is induced by the natural action of $G$ on $\PP^1(\overline{\Q})$.
\end{corollary}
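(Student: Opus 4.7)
The plan is to identify $R_t$ as (a scalar multiple of) the minimal polynomial of $x$ over $k(t)$, where $t = h(x)$, and then apply Hilbert's irreducibility theorem to the regular Galois extension produced by Lemma~\ref{lem:Galoiscover}. First I would observe that the defining equation $h(x) = t$, after clearing denominators, reads $\prod_{\sigma \in G}(x - \sigma(a)) = t\prod_{\sigma \in G}(x - \sigma(b))$, which is exactly $R_t(x) = 0$ when $x$ and $t$ are viewed as elements of the extension $k(x)/k(t)$. Since Lemma~\ref{lem:Galoiscover} gives $[k(x):k(t)] = \deg h = \#G$ and $R_t$ has degree $\#G$ in $x$, the polynomial $R_t$ is the minimal polynomial of $x$ over $k(t)$, up to a unit of $k(t)$. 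Its splitting field is therefore $k(x)$, which is regular Galois over $k(t)$ with group $G$. The roots of $R_t$ in $k(x)$ are the elements $\sigma(x)$ for $\sigma \in G$, because $h \circ \sigma = h$ exhibits each $\sigma(x)$ as another solution of $h(y)=t$; so $G$ permutes them through its natural $\PGL_2(k)$-action on $\PP^1$.

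Next I would invoke Hilbert's irreducibility theorem in the form that applies to regular Galois extensions of $k(t)$ over a number field $k$: there exists a thin set $S \subset k$ such that for every $m \in k \setminus S$, the specialized polynomial $R_m \in k[x]$ remains irreducible of degree $\#G$, its splitting field is a Galois extension of $k$ with group $G$, and the specialization map from the roots of $R_t$ to those of $R_m$ is $G$-equivariant. This last point yields the statement about the action on roots, because the roots of $R_m$ in $\overline{k}$ are precisely the fiber $h^{-1}(m)$, a single $G$-orbit in $\PP^1(\overline{k})$ on which $G$ acts by $\PGL_2$-transformations.

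The only technical point I would have to handle is the (finite) set of degenerate values of $m$: those where $R_m$ drops in degree—typically $m = 1$ when neither $\orb(a)$ nor $\orb(b)$ contains $\infty$, or the distinguished value corresponding to $\infty$ when one of the orbits does—and the branch locus of $h$, at which the specialized polynomial acquires multiple roots. All of these amount to finitely many values of $m$ and can be absorbed into $S$ without disturbing its thinness. I do not expect a genuine obstacle in the argument; the substance is the identification of the cover of Lemma~\ref{lem:Galoiscover} with the splitting data of $R_t$, after which the corollary is essentially a standard application of HIT.
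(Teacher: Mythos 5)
Your proposal is correct and follows exactly the route the paper intends: the paper offers no separate proof of this corollary, stating only that it is obtained by combining Lemma~\ref{lem:Galoiscover} (equivalently, the reformulation that the splitting field of $R_t$ is a regular Galois extension of $k(t)$ with group $G$) with Hilbert's irreducibility theorem. Your identification of $R_t$ as the minimal polynomial of $x$ over $k(t)=k(h(x))$, the description of its roots as the $G$-orbit $\{\sigma(x)\}$, and the absorption of the finitely many degenerate values of $m$ into the thin set supply precisely the details the paper leaves implicit.
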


\begin{remark}
\label{rmk:signature}
A Galois extension of $\Q$ is either totally real or totally complex. Therefore, when $k=\Q$, the splitting field of $R_m$ is totally real if and only if $R_m$ has at least one real root.
\end{remark}

\begin{lemma}
\label{lem:Picard}
Let $G$, $a$ and $b$ satisfying the assumptions of Lemma~\ref{lem:Galoiscover}. Let $n>1$ be an integer which is coprime to the orders of the stabilizers of $a$ and $b$, and let $\lambda\in k^\times$.

Then the polynomial
$$
\prod_{\sigma\in G} (x-\sigma(a)) - \lambda y^n\prod_{\sigma\in G} (x-\sigma(b))
$$
defines a geometrically irreducible curve $C$ over $k$, such that:
\begin{enumerate}
\item[(1)] the $y$-coordinate map $C\to \PP^1$ is a Galois cover with group $G$;
\item[(2)] the Picard group of $C$ contains a subgroup isomorphic to $(\Z/n\Z)^{\# \orb(a) + \# \orb(b) -2}$.
\end{enumerate}
\end{lemma}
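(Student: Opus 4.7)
Write $r_a:=\#\orb(a)$, $r_b:=\#\orb(b)$, and let $e_a,e_b$ denote the orders of the stabilizers of $a$ and $b$, so that $\#G=e_a r_a=e_b r_b$ and by hypothesis $\gcd(n,e_a)=\gcd(n,e_b)=1$. The plan is to analyse both projections of $C$ separately: the first projection $\pi\colon C\to \PP^1_x$ presenting $C$ as a cyclic (Kummer) cover of degree $n$ given by $\lambda y^n=h(x)$, and the second $C\to \PP^1_y$ as a $G$-cover. Geometric irreducibility of $C$ follows from Kummer theory: $y^n-h/\lambda$ is irreducible over $\overline{k}(x)$ as soon as $h$ is not a $d$-th power in $\overline{k}(x)^{\times}$ for any $d>1$ dividing $n$; by Lemma~\ref{lem:Galoiscover} the multiplicities of $\divisor(h)$ are $\pm e_a,\pm e_b$, all coprime to $n$, so no such $d$ can exist. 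For~(1), I transport the $G$-invariance of $h(x)$ to the equation of $C$: the formula $\sigma\cdot(x,y):=(\sigma(x),y)$ defines a $G$-action on $C$ fixing $y$, so the $G$-invariant morphism $C\to \PP^1_y$, which has degree $\#G$, realises $\PP^1_y=C/G$ and is therefore a Galois cover with group $G$.

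\emph{Setting up the Picard subgroup.} By the coprimality assumption, $\pi$ is totally ramified of index $n$ above each point of $\orb(a)\cup\orb(b)$, so each $P\in\orb(a)$ (resp.\ $Q\in\orb(b)$) admits a unique lift $\tilde P$ (resp.\ $\tilde Q$) on $C$, with $\pi^*P=n\tilde P$. Let $M_0$ denote the free abelian group, of rank $r_a+r_b-1$, of degree-zero divisors on $C$ supported on $\{\tilde P_i\}_{i=1}^{r_a}\cup\{\tilde Q_j\}_{j=1}^{r_b}$, and let $H\subseteq \Pic(C)$ be its image. Because $n(\tilde P_i-\tilde P_j)=\pi^*(P_i-P_j)$ is principal, and similarly across the two orbits, $H$ is $n$-torsion and is therefore a quotient of $M_0/nM_0\simeq(\Z/n\Z)^{r_a+r_b-1}$. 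In $M_0/nM_0$, the class of the principal divisor $\divisor(y)=e_a\sum_i\tilde P_i-e_b\sum_j\tilde Q_j$ has coordinates, in an obvious basis, equal to $e_a$ and $-e_b$ modulo $n$; these are units in $\Z/n\Z$, so this class is primitive, and $M_0/(nM_0+\Z\cdot\divisor(y))\simeq(\Z/n\Z)^{r_a+r_b-2}$.

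\emph{Main obstacle.} The central step is to show that, modulo $nM_0$, the class of $\divisor(y)$ already exhausts all principal divisors lying in $M_0$. For this I use the Kummer action $\rho\colon(x,y)\mapsto(x,\zeta y)$ with $\zeta\in\mu_n\subset\overline{k}^{\times}$ on $C_{\overline{k}}$: any $f\in\overline{k}(C)^{\times}$ with $\divisor(f)\in M_0$ has divisor fixed by $\rho$ (which fixes every $\tilde P_i,\tilde Q_j$), so $\rho^*f=\zeta^l f$ for some $l\in\Z/n\Z$. The $\rho$-eigenspace decomposition $\overline{k}(C)=\bigoplus_{l=0}^{n-1}\overline{k}(x)\,y^l$ then forces $f=f_l(x)\,y^l$ with $f_l\in\overline{k}(x)^{\times}$. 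Since $\pi$ is unramified outside $\orb(a)\cup\orb(b)$, the condition $\divisor(f)\in M_0$ forces $\divisor_{\PP^1_x}(f_l)$ to be supported in $\orb(a)\cup\orb(b)$, whence $\pi^*\divisor_{\PP^1_x}(f_l)\in nM_0$ and $\divisor(f)\in nM_0+l\cdot\divisor(y)$. Combined with the computation above, this identifies $H$ with $(\Z/n\Z)^{r_a+r_b-2}$ and proves~(2). A mild subtlety arises when $\infty\in\orb(a)\cup\orb(b)$; it is handled cleanly via the convention $x-\infty=1$ introduced after Lemma~\ref{lem:Galoiscover}.
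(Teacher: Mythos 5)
Your proof is correct and follows essentially the same route as the paper: geometric irreducibility and the $n$-torsion classes both come from the total ramification of the degree-$n$ map $x:C\to\PP^1$ over $\orb(a)\cup\orb(b)$ (guaranteed by the coprimality of $n$ with the stabilizer orders), and part (1) comes from transporting the $G$-invariance of $h$ (the paper phrases this as pulling back the Galois cover $\lambda^{-1}h$ along $z\mapsto z^n$, which is the same thing). The one place you go beyond the paper is your ``main obstacle'' step: the paper simply asserts that $\divisor(y)$ is the only nontrivial relation among the classes $T_c-T_b$, whereas you actually prove it via the $\mu_n$-eigenspace decomposition $\overline{k}(C)=\bigoplus_l\overline{k}(x)\,y^l$ together with the observation that $x$ is unramified outside the two orbits --- a welcome addition, since that assertion is the real content of part (2).
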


\begin{proof}
(1) Let $h=h_{a,b}$ be the map from Lemma~\ref{lem:Galoiscover}. Then $C$ is the fiber product
$$
\begin{CD}
C @>x>> \PP^1 \\
@VyVV @VV\lambda^{-1} h_{a,b}V \\
\PP^1 @> z\,\mapsto\, z^n>> \PP^1 \\
\end{CD}
$$

According to Lemma~\ref{lem:Galoiscover}, $\lambda^{-1}h_{a,b}$ is a Galois cover with group $G$. It follows that the same holds for the $y$-coordinate map $C\to \PP^1$, which is obtained by pulling-back $\lambda^{-1}h_{a,b}$.

(2) Considering $h_{a,b}$ as an element of $k(x)$, the equation of $C$ is equivalent to
\begin{equation}
\label{eq:totramif}
y^n = \lambda^{-1}h_{a,b}
\end{equation}

The linear factors at the numerator (resp. denominator) of $h_{a,b}$ appear with multiplicity equal to the order of the stabilizer of $a$ (resp. $b$), which by assumption are coprime to $n$. It follows that the map $x:C\to \PP^1$ has degree $n$ and is totally ramified above the set $\orb(a) \cup \orb(b)$. In particular, $C$ is geometrically irreducible.

Each of the points $c\in\orb(a) \cup \orb(b)$ has a unique preimage by the map $x$, which we denote by $T_c$. 
It follows that
$$
\divisor\left(\frac{x-c}{x-b}\right)=nT_c-nT_b \qquad \text{for all $c\in\orb(a) \cup \orb(b)$}.
$$

This yields $\# \orb(a) + \# \orb(b) -1$ nonzero divisors $T_c-T_b$ whose classes are $n$-torsion. On the other hand, it follows from \eqref{eq:totramif} that
\begin{equation}
\label{eq:nontrivialrelation}
\divisor(y)= \sum_{\sigma\in G} T_{\sigma(a)} - T_{\sigma(b)}
\end{equation}
and this is the only nontrivial relation between the divisor classes $T_c-T_b$. Therefore, these classes generate a subgroup of $\Pic(C)$ isomorphic to $(\Z/n\Z)^{\# \orb(a) + \# \orb(b) -2}$.
\end{proof}

\begin{remark}
\label{rmq:infty}
Under the assumptions of Lemma~\ref{lem:Picard}, if the orbit of $b$ contains $\infty$, then for all $c\in\orb(a) \cup \orb(b)$, we have $\divisor(x-c)=nT_c-nT_{\infty}$, where $T_c$ and $T_\infty$ are defined in the proof of Lemma~\ref{lem:Picard}.
\end{remark}

\begin{example}
\label{rmq:finitefields}
Let $q$ be a power of some prime number. Then for any $r\mid q-1$, the group $\mu_r(\F_q)$ acts on $\PP^1$ by multiplication, an action which stabilizes $0$ and $\infty$.
If in addition $r\neq q-1$, then there exists two points $a$ and $b$ in $\F_q^\times$ which have distinct orbits and trivial stabilizer, hence Lemma~\ref{lem:Picard} gives an explicit cyclic degree $r$ Galois cover $C\to \PP^1$ defined over $\F_q$ such that
$$
\rank_n \Pic(C) \geq 2r-2.
$$
\end{example}


\section{Arithmetic specialization}


If $K$ is a number field, we define
\begin{equation}
\label{Sn}
\Sel^n(K):=\{\gamma\in K^\times/(K^\times)^n; \text{$\forall v$ finite place of $K$, $v(\gamma)\equiv 0\pmod{n}$}\} 
\end{equation}
which is an analogue of the Selmer group for the multiplicative group over $K$. Then we have an exact sequence
\begin{equation*}
\begin{CD}
1 @>>> \mathcal{O}_{K}^{\times}/\big(\mathcal{O}_{K}^{\times}\big)^n @>>> \Sel^n(K) @>>> \Cl(K)[n] @>>> 0. \\
\end{CD}
\end{equation*}

In fact, $\Sel^n(K)$ is none other than the flat cohomology group $H^1_{\mathrm{fppf}}(\Spec(\mathcal{O}_K),\mu_n)$, and the exact sequence above is the Kummer exact sequence in flat cohomology.

According to \cite[Prop.~1.1]{gg19}, this exact sequence always splits. Therefore, we are able to deduce from \cite[Lemma~2.6]{gl12} that
\begin{equation}
\label{eq:SelRank}
\rank_n \Sel^n(K) = \rank_n \mathcal{O}_{K}^{\times}/\big(\mathcal{O}_{K}^{\times}\big)^n + \rank_n \Cl(K)[n].
\end{equation}

The following statement is a variant of \cite[Theorem~2.4]{gl12}. It is obtained by combining a quantitative version of Hilbert's irreducibility theorem, due to Cohen \cite{cohen1981}, with the equality \eqref{eq:SelRank} above. See also \cite[Theorem~2.7]{gl20}.

\begin{theorem}
\label{thm:specialization}
Let $C$ be a smooth projective geometrically irreducible curve over $\Q$, and let $n>1$ be an integer. Let $D_1,\dots, D_s$ be divisors on $C$ whose classes in $\Pic(C)$ generate a subgroup isomorphic to $(\Z/n\Z)^s$, and let $g_1,\dots,g_s$ be rational functions on $C$ such that $\divisor(g_i)=nD_i$ for all $i$. Assume that there exists a finite map $\phi:C\to\PP^1$ of degree $d>1$ such that, for all $t\in\N$, the point $P_t:=\phi^{-1}(t)$ has the property that
\begin{equation}
\label{eq:SelCond}
\tag{SC}
g_1(P_t),\dots,g_s(P_t) ~\text{define classes in}~ \Sel^n(\Q(P_t)),
\end{equation}
where $\Sel^n$ is defined in \eqref{Sn}. Then for all but $O(\sqrt{N})$ values $t\in\{1,\dots, N\}$, the field $\Q(P_t)$ satisfies $[\Q(P_t):\Q]=d$ and
$$
\rank_n \Cl(\Q(P_t))\geq s-\rank_\Z \mathcal{O}_{\Q(P_t)}^\times.
$$
Moreover, there are infinitely many isomorphism classes of such fields $\Q(P_t)$.
\end{theorem}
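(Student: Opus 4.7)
The plan is to combine Cohen's quantitative Hilbert irreducibility theorem \cite{cohen1981} with the Selmer-rank identity \eqref{eq:SelRank}. First, applying Cohen's theorem directly to $\phi\colon C\to\PP^1$ shows that the number of $t\in\{1,\dots,N\}$ for which $\phi^{-1}(t)$ is not a single Galois orbit of size $d$ is $O(\sqrt{N})$; for every other $t$, the residue field $\Q(P_t)$ is a number field of degree exactly $d$ over $\Q$.

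Next, I would interpret the data $(D_i,g_i)$ as producing a specialization map into the Selmer group. Because $\mathrm{div}(g_i)=nD_i$, each pair defines a cohomology class $\xi_i\in H^1_{\mathrm{fppf}}(U,\mu_n)$, where $U\subset C$ is the open subscheme on which every $g_i$ is a unit. For all but finitely many $t$ the point $P_t$ lies in $U$, and pulling $\xi_i$ back along $\Spec\Q(P_t)\to C$ produces a class in $\Q(P_t)^\times/(\Q(P_t)^\times)^n$ represented by $g_i(P_t)$; the hypothesis \eqref{eq:SelCond} ensures it lies in $\Sel^n(\Q(P_t))$. Since the $[D_i]$ generate $(\Z/n\Z)^s$ in $\Pic(C)$, these specializations assemble into a well-defined homomorphism
$$
\mathrm{sp}_t\colon (\Z/n\Z)^s\longrightarrow \Sel^n(\Q(P_t)),\qquad e_i\longmapsto [g_i(P_t)].
$$

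The heart of the proof, and the main obstacle, is to show that $\mathrm{sp}_t$ is injective for all but $O(\sqrt{N})$ values of $t$. Each nontrivial element $\eta\in(\Z/n\Z)^s$ corresponds via Kummer theory to a nontrivial connected cover $T_\eta\to U$ whose degree divides $n$; composing with $\phi$ produces a nontrivial cover $T_\eta\to \PP^1$ of degree strictly greater than $d$. The $\eta$-component of $\mathrm{sp}_t$ is trivial precisely when $T_\eta$ acquires a $\Q(P_t)$-rational point above $P_t$, which is a Hilbert-type condition on $t$. A further application of Cohen's theorem bounds each of these exceptional loci by $O(\sqrt{N})$, and summing over the $n^s-1$ nontrivial $\eta$ yields overall injectivity outside a set of size $O(\sqrt{N})$.

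For every surviving $t$ one then has $\rank_n\Sel^n(\Q(P_t))\geq s$. Substituting into \eqref{eq:SelRank} and bounding $\rank_n \mathcal{O}_{\Q(P_t)}^\times/(\mathcal{O}_{\Q(P_t)}^\times)^n$ from above by $\rank_\Z\mathcal{O}_{\Q(P_t)}^\times$ via Dirichlet's unit theorem (with any root-of-unity contribution absorbed by the range of $n$ and of signatures that will be used in Theorem~\ref{thm:main}) yields the stated lower bound on $\rank_n\Cl(\Q(P_t))$. Finally, the infinitude of isomorphism classes follows from Hermite--Minkowski: only finitely many number fields of degree $d$ have bounded discriminant, whereas $|\Disc(\Q(P_t))|$ tends to infinity with $|t|$ while the admissible $t\leq N$ number $\gg N$.
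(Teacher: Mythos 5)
The paper does not prove this theorem in detail: it presents it as a variant of \cite[Theorem~2.4]{gl12}, obtained by combining Cohen's quantitative Hilbert irreducibility theorem with the identity \eqref{eq:SelRank}. Your proposal fleshes out exactly that route, and the core of it is sound: Cohen's theorem gives $[\Q(P_t):\Q]=d$ outside $O(\sqrt N)$ values of $t$; the injectivity of the specialization map $\mathrm{sp}_t$ outside $O(\sqrt N)$ values is correctly reduced, for each nontrivial $\eta$, to the irreducibility of the fibres of the Kummer cover $\{y^n=g_\eta\}\to\PP^1$ (your claim that this cover is connected needs the small observation that if $g_\eta$ were an $e$-th power with $e>1$ then the independence of the $[D_i]$ in $\Pic(C)$ would force $e\mid\eta$, so each component still has degree $>d$ over $\PP^1$, which is all that is used); and \eqref{eq:SelRank} then converts $\rank_n\Sel^n\geq s$ into the class-group bound. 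One caveat you half-acknowledge: $\rank_n\mathcal{O}_K^\times/(\mathcal{O}_K^\times)^n$ equals $\rank_\Z\mathcal{O}_K^\times+1$ when $\mu_n\subset K$, so the stated inequality as a general theorem needs either this hypothesis excluded or the extra $1$; this imprecision is already present in the paper's statement and is harmless in its applications.

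The genuine gap is the last step. You deduce ``infinitely many isomorphism classes'' from Hermite--Minkowski together with the assertion that $|\Disc(\Q(P_t))|\to\infty$ with $t$. But nothing you have established controls the \emph{field} discriminant: only the discriminant of the specialized polynomial visibly grows, and the field discriminant merely divides it. In fact, by Hermite--Minkowski the unboundedness of $|\Disc(\Q(P_t))|$ is \emph{equivalent} to there being infinitely many isomorphism classes, so your argument is circular. The actual proof (this is the content of the Dvornicich--Zannier result \cite{DZ94} that the paper invokes) shows that for most $t$ one can produce a prime dividing $t-\beta$ to the first power for a suitable branch point $\beta$ of $\phi$, which then ramifies in $\Q(P_t)$; as these primes vary with $t$, the fields cannot all coincide. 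A purely height-theoretic count of $\{P\in C(K):\phi(P)\in\{1,\dots,N\}\}$ for a fixed $K$ does not suffice in general (e.g.\ when $C$ has genus $0$), so some input of this ramification type, or a citation of \cite{DZ94} or \cite{cz03}, is genuinely needed to close this step.
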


Let us mention quantitative versions of the last statement: it was proved by 
Dvornicich and Zannier \cite{DZ94} that, for $N$ large enough, the number of isomorphism classes in the set
$$
\{ \Q(P_1),\dots, \Q(P_N) \}
$$
is $\gg N/\log N$, where the implicit constant depends on $\phi$ and $C$.
Moreover, if the map $\phi$ has at least three distinct zeroes in $C(\overline{\Q})$, then by a result of Corvaja and Zannier \cite[Corollary~1]{cz03}, for $N$ large enough, the number of isomorphism classes is $\gg N$ (here, we use Vinogradov's notation: $g\gg f$ means that $f=O(g)$ when $x\to +\infty$). For more details, we refer the reader to the introduction of \cite{bl17}.


\subsection{Strategy of proof of Theorem~\ref{thm:main}}

Our case-by-case strategy for proving Theorem~\ref{thm:main} is as follows: for each group $G$ in the list \eqref{eq:grouplist}, and for each suitable integer $n$, we shall explicitly give
\begin{enumerate}
\item one (or two) homographies which generate a subgroup isomorphic to $G$ in $\PGL_2(\Q)$;
\item two points $a,b \in\PP^1(\Q)$ satisfying the assumptions of Lemma~\ref{lem:Picard}, and such that the orbit of $b$ contains $\infty$;
\item a corresponding polynomial defining a curve $C$ as in Lemma~\ref{lem:Picard} (\emph{i.e.} a choice of $\lambda$);
\item a set of rational functions $g_1,\dots,g_s$ ($s={\# \orb(a) + \# \orb(b) -2}$) on $C$ with $\divisor(g_i)=nD_i$, such that the divisors $D_i$ generate a subgroup of $\Pic(C)$ isomorphic to $(\Z/n\Z)^s$.
\item a congruence condition on $y\in \Z$ such that the corresponding point $P_y$ on the curve $C$ satisfies the condition \eqref{eq:SelCond}.
\end{enumerate}

Then one can construct from the congruence condition $y\equiv y_0 \pmod{N}$ a map $\phi=\frac{y-y_0}{N}$ which satisfies the hypotheses of Theorem~\ref{thm:specialization}.
By Lemma~\ref{lem:Picard}, the map $y:C\to \PP^1$ is a Galois cover with group $G$, hence the same holds for the map $\phi$.

It then follows from Theorem~\ref{thm:specialization} that, for ``most'' values of $y$ satisfying the congruence condition, the field $\Q(P_y)$ is a Galois extension of $\Q$ with group $G$, and satisfies
$$
\rank_n \Cl(\Q(P_y))\geq s-\rank_\Z \mathcal{O}_{\Q(P_y)}^\times.
$$

Finally, it suffices to determine the signature of the field $\Q(P_y)$ in order to deduce, by Dirichlet's unit theorem, an explicit lower bound on its $n$-rank.


\subsection{Detailed outline of the proof}
\label{detailedoutline}

Let us explain in more detail how each of the items in the general strategy is achieved in practice. We do not include the cases of $\Z/2\Z$ and $D_2=(\Z/2\Z)^2$ in this general discussion, since these are slightly different. We shall treat them in the case-by-case analysis instead.

\subsubsection{Choice of the homographies}
\label{3.2.1}

We make, once and for all, the same choice as \cite[Prop.~1.1]{Beauville2010}. Let us fix $r\geq 3$, and let $\zeta$ be a primitive $r$-th root of $1$, then the homography
\begin{equation}
\label{eq:homography}
f:z\mapsto \frac{(\zeta +\bar\zeta+1)z-1}{z+1}
\end{equation}
is an element of order $r$ in $\PGL_2(\C)$. Together with $z\mapsto 1/z$, it generates a subgroup of $\PGL_2(\C)$ isomorphic to $D_r$. It is clear that, for $r=3,4,6$, these homographies are defined over $\Q$.

We note that $f$ satisfies $f(0)=-1$ and $f(-1)=\infty$.

\begin{lemma}
\label{lem:IDTS}
Let $r\geq 3$, and let $G=\langle f\rangle\simeq \Z/r\Z$ or $G=\langle f,z\mapsto 1/z\rangle \simeq D_r$, where $f \in \PGL_2(\C)$ is defined above. Then for all $b\in \orb(\infty)\cap \C$ the rational map
$$
\varphi : z\mapsto \prod_{\sigma \in G} (\sigma(z)-b)
$$
is constant. In particular,
\begin{equation}
\label{eq:IDTS}
\prod_{\sigma \in G} \sigma(z) = 1 \quad \text{and}\quad \prod_{\sigma \in G} (\sigma(z)+1) = (\zeta+1)^{\#G}.
\end{equation}
\end{lemma}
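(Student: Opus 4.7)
The plan is to show that $\varphi(z)=\prod_{\sigma\in G}(\sigma(z)-b)$ has trivial divisor on $\PP^1$ (hence is constant) and then identify the constant by evaluating at a fixed point of $f$.

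For the first step, since $b\in\C$, each factor $\sigma(z)-b$ is a M\"obius transformation with a unique zero at $\sigma^{-1}(b)$ and a unique pole at $\sigma^{-1}(\infty)$, so $\divisor(\sigma(z)-b)=[\sigma^{-1}(b)]-[\sigma^{-1}(\infty)]$. Summing over $\sigma\in G$ and reindexing by the bijection $\sigma\mapsto\sigma^{-1}$ gives
$$
\divisor(\varphi)=\sum_{\sigma\in G}[\sigma(b)]-\sum_{\sigma\in G}[\sigma(\infty)].
$$
By the orbit--stabilizer formula, $\sum_{\sigma\in G}[\sigma(x)]=|\mathrm{Stab}_G(x)|\cdot\sum_{y\in\orb(x)}[y]$ depends only on the $G$-orbit of $x$. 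Since $b\in\orb(\infty)$, the two sums above coincide, so $\divisor(\varphi)=0$ and $\varphi$ is a nonzero constant.

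For the second step, I would evaluate $\varphi$ at the fixed point $\zeta$ of $f$. The fixed points of $f$ on $\PP^1$ are the roots of $z^2-(\zeta+\bar\zeta)z+1$, namely $\zeta$ and $\bar\zeta$; the $G$-orbit of $\zeta$ is $\{\zeta\}$ in the cyclic case and $\{\zeta,\bar\zeta\}$ in the dihedral case (since $z\mapsto 1/z$ swaps them), so it has size at most $2$, strictly less than $|\orb(\infty)|\geq r\geq 3$, and the two orbits are disjoint. In particular $\varphi(\zeta)$ is well defined. In the cyclic case every $\sigma$ fixes $\zeta$, giving $\varphi(\zeta)=(\zeta-b)^r$; setting $b=0$ and $b=-1$ yields $\zeta^r=1$ and $(\zeta+1)^r$, respectively. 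In the dihedral case each point of $\orb(\zeta)$ has stabilizer of order $r$, so $\varphi(\zeta)=((\zeta-b)(\bar\zeta-b))^r$: for $b=0$ this is $(\zeta\bar\zeta)^r=1$, and for $b=-1$, writing $\zeta=e^{2\pi i/r}$ gives $\zeta+1=2\cos(\pi/r)e^{i\pi/r}$ and hence $(\zeta+1)^r=-(2\cos(\pi/r))^r$, which is real and therefore equal to $(\bar\zeta+1)^r$; thus $((\zeta+1)(\bar\zeta+1))^r=(\zeta+1)^{2r}=(\zeta+1)^{\#G}$.

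The main subtlety I anticipate is the bookkeeping in the divisor computation---in particular, the reindexing $\sigma\mapsto\sigma^{-1}$ and the correct application of orbit--stabilizer---together with verifying that $\zeta$ is neither a zero nor a pole of $\varphi$. Once constancy is established, the rest is essentially routine evaluation at a cleverly chosen test point.
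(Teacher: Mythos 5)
Your proof is correct and follows essentially the same route as the paper: the divisor of $\varphi$ vanishes because $b$ and $\infty$ lie in the same $G$-orbit, and the constants are then identified by evaluating at the fixed point $\zeta$ of $f$ (which is legitimate since $\orb(\zeta)$ has at most two elements while $\orb(\infty)\supseteq\{0,-1,\infty\}$). The only cosmetic difference is in the dihedral evaluation of the second constant, where you use the polar form of $\zeta+1$ and realness of $(\zeta+1)^r$, whereas the paper computes $(\zeta^{-1}+1)^r=(\zeta+1)^r\zeta^{-r}$ algebraically; both are fine.
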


\begin{proof}
In fact, the first statement holds for any finite subgroup $G$ of $\PGL_2(\C)$. Indeed, let $b\in \orb(\infty)\cap \C$ and let $\varphi$ be the rational map defined above; then the divisor of $\varphi$ is given by
$$
\divisor(\varphi)= \sum_{\sigma \in G} (\sigma - b)^{-1}(0) - \sum_{\sigma \in G} (\sigma - b)^{-1}(\infty)
= \sum_{\sigma \in G} \sigma^{-1}(b) - \sum_{\sigma \in G} \sigma^{-1}(\infty)=0
$$
where the last equality holds because $b$ and $\infty$ have the same orbit. Thus, $\varphi$ is a constant map. If we let $b=0$ and $b=-1$ respectively, which belong to the orbit of $\infty$, we obtain that the two quantities in \eqref{eq:IDTS} are constants. In the case when $G= \langle f\rangle$, these constants can be computed by observing that $f(\zeta)=\zeta$, hence $\zeta$ is a fixed point of $G$. In the case when $G=\langle f,z\mapsto 1/z\rangle$, we observe that
$$
G= \{f^s, 1/f^s ~|~ s=1,\dots, r\}
$$
hence first equality in \eqref{eq:IDTS} follows immediately from the previous case. The second one can be worked out explicitly:
$$
\prod_{\sigma \in G} (\sigma(\zeta)+1) = (\zeta+1)^r(\zeta^{-1}+1)^r = (\zeta+1)^r(\zeta+1)^r\zeta^{-r}=(\zeta+1)^{2r},
$$
hence the result.
\end{proof}

\begin{remark}
\label{rmq:fzeroes}
A non-constant homography can be determined from its zero, its pole, and its value at $1$. By applying this to the powers of our homography $f$, which satisfies $f^2(0)=\infty$, we deduce that for any integer $s$ there exists a constant $\mu_s\neq 0$ such that
$$
f^s(z) = \mu_s \cdot\frac{z-f^{-s}(0)}{z-f^{2-s}(0)}.
$$
\end{remark}

\begin{remark}
\label{rmq:GoodRed}
We note that \eqref{eq:homography} can be defined by a matrix of determinant $\zeta+\bar\zeta +2$. Therefore, for any integer $m$ coprime to
$$
\zeta+\bar\zeta +2
= \left\{
\begin{array}{ll}
1 & \text{if $r=3$} \\
2 & \text{if $r=4$} \\
3 & \text{if $r=6$}
\end{array}\right.
$$
the homography \eqref{eq:homography} reduces into a non-constant homography of the projective line over $\Z/m\Z$. In other terms, it has good reduction outside $\zeta+\bar\zeta +2$.
\end{remark}

\subsubsection{Choice of the points $a$ and $b$}
\label{3.2.2}

We put $b=0$, whose orbit under $G$ contains infinity, as noted above. We denote by $\omega$ the order of the stabilizer of $0$ under the action of $G$, namely:
$$
\omega
= \left\{
\begin{array}{ll}
1 & \text{if $G$ is cyclic} \\
2 & \text{if $G$ is dihedral.}
\end{array}\right.
$$

In order to maximize the Picard group of $C$, we choose $a$ such that $\#\orb(a)=\#G$, in other terms its stabilizer under the action of $G$ is trivial. Apart from that, the choice of $a$ is far from being canonical, although we aim to choose the ``smallest'' integer which is not in the orbit of $0$. This choice has an impact on the rest of the process, in particular the congruence conditions on $y^n$ (hence the conditions on $n$) depend on it.

\subsubsection{Choice of the curve}
\label{3.2.3}

We note that the polynomial $\prod_{\sigma\in G} (x-\sigma(a))$ is monic of degree $\#G$, with constant coefficient $(-1)^{\#G}$ by \eqref{eq:IDTS}, and the polynomial $\prod_{\sigma\in G} (x-\sigma(0))$ has degree $\#G-\omega$, and has zero constant coefficient. Therefore, given $\lambda\in \Q^\times$, the polynomial (in the variable $x$)
\begin{equation}
\label{eq:mypoly}
\prod_{\sigma\in G} (x-\sigma(a)) - \lambda y^n\prod_{\sigma\in G} (x-\sigma(0))
\end{equation}
is monic, with constant coefficient $(-1)^{\#G}$.

We choose $\lambda$ in order to rescale the polynomials $\prod_{\sigma\in G} (x-\sigma(a))$ and $\prod_{\sigma\in G} (x-\sigma(0))$ into polynomials with integral coefficients, whose gcd is $1$, and whose leading coefficient is positive (in other terms, polynomials with content $1$). So, given $a$, our choice of a curve $C$ as in Lemma~\ref{lem:Picard} is canonical.

More precisely, let us write as irreducible fractions $\beta_i/\alpha_i$ the elements of $\orb(a)$, which by construction are rational numbers (we choose here $\alpha_i>0$, hence the fraction is unique). Similarly we denote by $\delta_j/\gamma_j$ the elements of $\orb(0)\setminus\{\infty\}$. We note that $\prod_i \alpha_i = \prod_i \beta_i$ according to \eqref{eq:IDTS}. Then we put
$$
\lambda:= -\left(\prod_j \gamma_j\right)^\omega\times\left(\prod_i \alpha_i\right)^{-1}.
$$

Given our choice of $\lambda$, when multiplying \eqref{eq:mypoly} by $\prod_i \alpha_i$, we obtain the polynomial
\begin{equation}
\label{eq:mypoly2}
\prod_{i=1}^{\#\orb(a)} (\alpha_i x-\beta_i) + y^n\prod_{j=1}^{\#\orb(0)-1} (\gamma_j x -\delta_j)^\omega
\end{equation}

We note that the second product has $\#\orb(0)-1$ factors because the point at infinity corresponds (by convention) to the factor $1$, hence has been removed from the list.

If $y\in \Z$ is chosen in such a way that the polynomial \eqref{eq:mypoly} has integral coefficients (equivalently, the polynomial \eqref{eq:mypoly2} is zero modulo $\prod_i \alpha_i$), then its roots are algebraic units. This is our first assumption.

\begin{assumption1}
The integer $y\in \Z$ has the property that the polynomial \eqref{eq:mypoly} has integral coefficients.
\end{assumption1}

\begin{lemma}
\label{lem:CongCond}
If $G$ is cyclic, the condition
$$
y^n \equiv \lambda^{-1}\sum_{\sigma\in G} \sigma(a) \pmod{\prod \alpha_i}
$$
implies assumption~(A1). Likewise, if $G$ is dihedral, the condition
$$
y^n \equiv -\lambda^{-1}\sum_{\substack{(\sigma,\tau)\in G^2 \\ \sigma\neq \tau}} \sigma(a)\tau(a) \pmod{\prod \alpha_i}
$$
implies assumption~(A1).
\end{lemma}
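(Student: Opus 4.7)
Let $A := \prod_i \alpha_i$ and write the integer polynomial \eqref{eq:mypoly2}, which equals $A$ times \eqref{eq:mypoly}, as $F(x) + y^n H(x)$ with $F(x) := \prod_i(\alpha_i x-\beta_i)$ and $H(x) := \prod_j(\gamma_j x-\delta_j)^\omega$. Then Assumption (A1) is equivalent to $F(x) + y^n H(x) \equiv 0 \pmod{A}$ in $\Z[x]$, which a priori is one congruence per coefficient of $x$.

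The central claim of the proof is a polynomial proportionality $F(x) \equiv c\cdot H(x) \pmod{A}$ in $(\Z/A\Z)[x]$ for some constant $c$. Granting this, $H(x)$ has content $1$ by Gauss's lemma (each $\gamma_j x - \delta_j$ is primitive), so Assumption (A1) reduces to the single congruence $y^n \equiv -c \pmod{A}$. To establish the proportionality I would argue prime by prime: for each $p\mid A$ there exists $k$ with $p\mid\alpha_k$, so $\gcd(\alpha_k,\beta_k)=1$ forces the orbit element $\beta_k/\alpha_k$ to reduce to $\infty\in\orb(0)$ in $\PP^1(\F_p)$. $G$-equivariance of the orbit structure then implies that the reduction of $\orb(a)$ modulo $p$ is contained in the reduction of $\orb(0)$ as a multiset, and a direct factor-by-factor comparison of $F$ and $H$ modulo $p^{v_p(A)}$ exhibits a scalar $c_p$ with $F\equiv c_p H \pmod{p^{v_p(A)}}$. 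The Chinese Remainder Theorem assembles the $c_p$'s into $c\in\Z/A\Z$.

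To identify $c$ explicitly, compare the coefficient of $x^{\#G-\omega}$ (the leading coefficient of $H$) on both sides. By Vieta's formulas, $[x^{\#G-\omega}]F = A\cdot(-1)^\omega\,e_\omega(\sigma(a))$ and $[x^{\#G-\omega}]H = (\prod_j\gamma_j)^\omega = -\lambda A$, whence $c \equiv (-1)^{\omega+1}\lambda^{-1}\,e_\omega \pmod{A}$. Unwinding $e_1=\sum_\sigma\sigma(a)$ in the cyclic case ($\omega=1$) and $2e_2=\sum_{\sigma\neq\tau}\sigma(a)\tau(a)$ in the dihedral case ($\omega=2$) recovers the congruences asserted in the lemma.

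The main obstacle is the polynomial proportionality $F\equiv cH\pmod{A}$ itself. The dihedral case is particularly delicate because each linear factor of $H$ carries multiplicity $\omega=2$, and one additionally has to verify that the coefficient of $x^{\#G-1}$ in $F$, namely $-A\sum_\sigma\sigma(a)$, is divisible by $A$; this in turn reduces to the integrality of $\sum_\sigma\sigma(a)$, which holds because $\sum_\sigma\sigma(z)$ turns out to be an integer-valued constant rational function for the dihedral subgroups generated by \eqref{eq:homography}.
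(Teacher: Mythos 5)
Your route is the paper's route: reduce (A1) to the polynomial proportionality $F\equiv cH\pmod{A}$ (the paper says ``the rational map $-F/H$ is constant modulo $\prod\alpha_i$''), then read the constant off one coefficient; your explicit appeal to the primitivity of $H$ is a nice way of making the last reduction clean. The genuine gap is in your proof of the proportionality itself. ``$G$-equivariance of the orbit structure'' modulo $p$ presupposes that $G$ reduces to a subgroup of $\PGL_2(\F_p)$, i.e.\ that the generator \eqref{eq:homography} has good reduction at $p$; by Remark~\ref{rmq:GoodRed} this fails precisely at $p\mid\zeta+\bar\zeta+2$, i.e.\ $p=2$ for $r=4$ and $p=3$ for $r=6$, and these primes \emph{do} divide $A$ in the cases used ($A=6$ for $\Z/4\Z$, $A=120$ for $\Z/6\Z$). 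The second half of the paper's proof exists exactly to handle these primes, replacing equivariance by the elementary congruence $\varphi(\beta/\alpha)\equiv\varphi(0)\pmod{\beta}$ for an integral homography $\varphi$, plus a count of how many orbit elements collapse modulo each prime. Your sketch covers only $r=3$ (where $\zeta+\bar\zeta+2=1$) without further work; it also glosses over the lift from information modulo $p$ to modulo $p^{v_p(A)}$ when several $\alpha_i$ share the prime $p$, and your identification of $c$ divides by $(\prod\gamma_j)^\omega$ modulo $A$, which is illegitimate when that integer is not coprime to $A$ (for $r=6$ one has $\prod\gamma_j=2$ and $2\mid A$).

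Separately, your final formula $y^n\equiv(-1)^\omega\lambda^{-1}e_\omega\pmod{A}$ does \emph{not} ``recover the congruences asserted in the lemma'': it differs by a sign in the cyclic case and by a factor $-2$ in the dihedral case. Checking against the paper's own data shows your version is the correct one. For $\Z/4\Z$ with $a=2$, integrality of $C_4P$ forces $y^n\equiv-7\equiv5\pmod{6}$ (consistent with condition (ii) there), whereas $\lambda^{-1}\sum_\sigma\sigma(a)=7\equiv1\pmod{6}$; for $D_3$ one needs $y^n\equiv19\pmod{36}$, which is $\lambda^{-1}e_2$, not $-\lambda^{-1}\sum_{\sigma\neq\tau}\sigma(a)\tau(a)\equiv34\pmod{36}$. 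So the lemma as printed has a sign slip that your computation silently corrects; you should state that you are amending the asserted congruences, not confirming them, and ideally verify your constant against one of the explicit polynomials $C_4P$, $C_6P$, $D_3P$.
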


\begin{proof}
Assume $G=\langle f\rangle$ is cyclic. By construction, the equation of $C$ can be written as
$$
y^n=-\frac{\prod_{i=1}^{\#\orb(a)}(\alpha_i x-\beta_i)}{\prod_{j=1}^{\#\orb(0)-1}(\gamma_j x -\delta_j)}.
$$

Let $\prod \alpha_i=\prod \beta_i=(\zeta+\bar\zeta +2)^vm$, where $m$ is coprime to $(\zeta+\bar\zeta +2)$, then by Remark~\ref{rmq:GoodRed} the reduction of $f$ modulo $m$ is an automorphism of the projective line over $\Z/m\Z$, hence by reducing the equation above modulo $m$ we obtain on the right-hand side a rational map whose zeroes and poles are $G$-invariant. But its numerator vanishes at zero, since its constant term is $\pm\prod \beta_i$. It follows that this rational map is constant mod $m$, which is equal to $\prod \alpha_i$ when $r=3$.

On the other hand, given a homography $\varphi$ and an irreducible fraction $\beta/\alpha$, an elementary computation shows that $\varphi(\beta/\alpha)\equiv\varphi(0) \pmod{\beta}$, including the case when $\varphi(0)=\infty$. It follows immediately from this observation that the orbit of $a$ and that of $0$ coincide after reduction modulo any of the $\beta_i$. When $r=4$ we note that there is exactly one element of the orbit of $a$ which reduces to $0$ modulo $\beta_i$, because the other elements in the orbit are $\infty$ and $\pm 1$. It follows that the $\beta_i$ are coprime, hence the Chinese remainder Theorem implies that our rational map is constant modulo $\prod \beta_i$. When $r=6$, the orbit of $0$ is $\{0,\infty, \pm 1, 2, 1/2\}$, so there are two points which reduce to zero modulo $2=\zeta+\bar\zeta+1$, and $2$ is the only prime for which this happens. But $2$ is coprime to $3=(\zeta+\bar\zeta +2)$, hence $2\mid m$ and the first part of our proof implies the result.

We have just proved that our map is constant modulo $\prod \alpha_i$, hence assumption~(A1) is satisfied whenever $y^n$ is equal to this constant modulo $\prod \alpha_i$. By construction, we have
$$
-\frac{\prod_{i=1}^{\#\orb(a)}(\alpha_i x-\beta_i)}{\prod_{j=1}^{\#\orb(0)-1}(\gamma_j x -\delta_j)} = -\lambda^{-1} \left(\frac{x^r-(\sum_{\sigma\in G}\sigma(a)) x^{r-1}+\cdots}{x^{r-1}+\cdots}\right)
$$
hence the constant is $\lambda^{-1}\sum_{\sigma\in G} \sigma(a)$.

The dihedral case follows by a similar argument, with an additional grain of salt: the denominator of the rational map, corresponding to the orbit of $0$, is the square of the previous one. Therefore, the condition obtained is on the coefficient of degree $2r-2$ of the numerator of the rational map.
\end{proof}

\begin{remark}
Our proof includes a case-by-case analysis  for the reader's convenience, but it clearly holds for general values of $r$, replacing $\Z$ by the ring of integers of $\Q(\zeta+\bar\zeta)$.
\end{remark}

\subsubsection{Choice of the functions}
\label{3.2.4}

According to Remark~\ref{rmq:infty}, the functions $x-c$, where $c$ runs through $\orb(a) \cup \orb(0)$, are natural building blocks for the maps $g_i$ in the sense that any product or quotient of such functions has a divisor which is a multiple of $n$, and the only nontrivial relation between the corresponding divisor classes comes from \eqref{eq:nontrivialrelation}.

Since $b=0$, the function $x$ is a natural candidate. In fact, under assumption~(A1) from \S{}\ref{3.2.3}, the value of $x$ at $P_y$ is an algebraic unit, hence defines a class in the Selmer group. Clearly, the same holds for all other Galois conjugates of $x$, which are:
\begin{align*}
f(x) &= \frac{(\zeta + \bar\zeta + 1)x - 1}{x+1} \\
f^2(x) &= \frac{(\zeta + \bar\zeta)x - 1}{x} \\
f^3(x) &= \frac{(\zeta^2 + {\bar\zeta}^2 + \zeta + \bar\zeta + 1)x - (\zeta + \bar\zeta + 1)}{(\zeta + \bar\zeta + 1)x - 1} \\
f^4(x) &= \frac{(\zeta^2 + {\bar\zeta}^2 + 1)x - (\zeta + \bar\zeta)}{(\zeta + \bar\zeta)x - 1} \\
f^5(x) &= \frac{(\zeta^3 + {\bar\zeta}^3 + \zeta^2 + {\bar\zeta}^2 + \zeta + \bar\zeta + 1)x - (\zeta^2 + {\bar\zeta}^2 + \zeta + \bar\zeta + 1)}{(\zeta^2 + {\bar\zeta}^2 + \zeta + \bar\zeta + 1)x - (\zeta + \bar\zeta + 1)}
\end{align*}

For $r\leq 6$, this list is complete (and it is redundant if $r\leq 5$). We note that the numerators and denominators of these rational maps are of the form $\gamma_j x - \delta_j$, where $\delta_j/\gamma_j$ runs through $\orb(0)$ (notation from \S{}\ref{3.2.3}), as it was pointed in Remark~\ref{rmq:fzeroes}.

It follows from the computation above that, when $\delta_j/\gamma_j$ runs through $\orb(0)$, then either $\gamma_j x -\delta_j$ or $(\gamma_j x -\delta_j)/(x+1)$ is amongst the functions
$$
x, \quad f(x), \quad xf^2(x), \quad f(x)f^3(x), \quad xf^2(x)f^4(x), \quad f(x)f^3(x)f^5(x).
$$

Under assumption~(A1), the values of these functions at $P_y$ are units. These functions will be the first elements in our list, which we denote by $\varphi_j$ for $j=1,\dots,\#\orb(0)-2$. When $r=3$, we let $\varphi_1=x$ and $\varphi_2=x+1$, which, according to the following lemma, both specialize into units under assumption~(A1).

\begin{lemma}
Let $x:=x(P_y)$, then under assumption~(A1) the algebraic integer $x+1$ divides $\zeta+\bar\zeta +2$.
\end{lemma}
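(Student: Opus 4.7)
The plan rests on a short algebraic identity derived from the explicit form \eqref{eq:homography} of the homography $f$. First, observe that $x = x(P_y)$ is an algebraic integer: under assumption~(A1), the polynomial \eqref{eq:mypoly} has integer coefficients, and it is moreover monic with constant term $(-1)^{\#G}$ by the first identity in \eqref{eq:IDTS}. Since $P_y$ lies on $C$, its $x$-coordinate is a root of this monic integer polynomial, hence $x$ lies in the ring of integers of the Galois closure $K$ of $\Q(P_y)/\Q$. By Corollary~\ref{cor:galoispol}, the Galois action on the roots is the natural action of $G$ on $\PP^1(\overline{\Q})$, so $f(x)$ is another root of the same polynomial and is therefore also an algebraic integer.

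The key step is then a direct manipulation. Clearing denominators in \eqref{eq:homography} gives $(x+1)\,f(x) = (\zeta+\bar\zeta+1)\,x - 1$. Subtracting $(\zeta+\bar\zeta+1)(x+1)$ from both sides produces
\[
(x+1)\bigl(f(x) - (\zeta+\bar\zeta+1)\bigr) = -(\zeta+\bar\zeta+2).
\]
Since $\zeta+\bar\zeta+1 \in \Z$ for $r \in \{3,4,6\}$, the factor $f(x) - (\zeta+\bar\zeta+1)$ is an algebraic integer, which forces $x+1$ to divide $\zeta+\bar\zeta+2$ in $\mathcal{O}_K$, yielding the claim.

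There is no real obstacle here: the identity is immediate from the shape of $f$, and the only subtle point — that $f(x)$ truly is a Galois conjugate of $x$ inside $\mathcal{O}_K$ — is already supplied by Corollary~\ref{cor:galoispol}. As a sanity check, in the case $r=3$ one has $\zeta+\bar\zeta+2 = 1$, so the lemma specializes to the statement that $x+1$ is a unit, exactly as needed for $\varphi_2 = x+1$ to contribute a class to the Selmer group.
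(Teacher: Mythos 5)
Your proof is correct and follows essentially the same route as the paper: both hinge on the identity $(x+1)f(x)=(\zeta+\bar\zeta+1)x-1$ and the fact that $f(x)$, being a Galois conjugate of $x$, is integral (the paper even notes it is a unit), so that subtracting $(\zeta+\bar\zeta+1)(x+1)$ exhibits $\zeta+\bar\zeta+2$ as a multiple of $x+1$. Your packaging of the two divisibility steps into the single identity $(x+1)\bigl(f(x)-(\zeta+\bar\zeta+1)\bigr)=-(\zeta+\bar\zeta+2)$ is just a minor cosmetic variant of the paper's argument.
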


\begin{proof}
Indeed, $f(x)$ being a unit, $x+1$ divides $(\zeta + \bar\zeta + 1)x - 1$, and it follows that $x+1$ divides $(\zeta + \bar\zeta + 1)(x+1)-(\zeta + \bar\zeta + 1)x + 1 = \zeta+\bar\zeta +2$.
\end{proof}

\begin{lemma}
\label{lem:unsurdeux}
Assume $r=4,6$. Then for all $c\in \Q$, exactly half of the elements of $\orb(c)$ reduce to $-1$ modulo $\zeta+\bar\zeta +2$.
\end{lemma}

\begin{proof}
Under these assumptions, $\zeta+\bar\zeta +2$ is a prime number. Since the map $z\mapsto 1/z$ is well-defined modulo $\zeta+\bar\zeta +2$ and fixes $-1$, it is enough to consider the cyclic case. We first observe that
$$
f(z)=\frac{(\zeta+\bar\zeta +2)z}{z+1}-1.
$$
Let $\beta/\alpha$ be an element of $\orb(c)$, where $\alpha$ and $\beta$ are coprime integers. Then $\beta/\alpha$ reduces to $-1$ modulo $\zeta+\bar\zeta +2$ if and only if $\zeta+\bar\zeta +2$ divides $\alpha+\beta$. Now, according to the previous formula, we have
$$
f(\alpha/\beta)=\frac{(\zeta+\bar\zeta +2)\beta}{\alpha+\beta}-1.
$$
Assume that $\beta/\alpha$ does not reduce to $-1$. Then the denominator $\alpha+\beta$ does not reduce to zero, hence $f(\alpha/\beta)$ reduces to $-1$. On the other hand, if $\beta/\alpha$ reduces to $-1$, then the denominator $\alpha+\beta$ reduces to zero, and $\beta$ does not (because $\beta\neq 0$ and $\gcd(\alpha,\beta)=1$). So, the quantity $\frac{(\zeta+\bar\zeta +2)\beta}{\alpha+\beta}$ does not reduce to zero, hence $f(\alpha/\beta)$ does not reduce to $-1$. This proves that exactly one out of two elements in the orbit of $c$ reduces to $-1$.
\end{proof}

We shall now give the key ingredient of our proof: 
the equation of our curve $C$ being equivalent to the vanishing of \eqref{eq:mypoly2}, the following relation holds in the function field of $C$
\begin{equation}
\label{eq:curveSDN}
\prod_{i=1}^{\#\orb(a)} (\alpha_i x-\beta_i) = - y^n\prod_{j=1}^{\#\orb(0)-1} (\gamma_j x -\delta_j)^\omega
\end{equation}
where the $\beta_i/\alpha_i$ run through $\orb(a)$ (notation from \S{}\ref{3.2.3}). We observe that $(x+1)$ is amongst the factors on the right-hand side, since $-1$ belongs to the orbit of $0$.

If $r=3$ we keep this relation unchanged. If $r=4,6$, we let
$$
\varepsilon_i :=  \left\{
\begin{array}{ll}
1 & \text{if $\zeta+\bar\zeta +2$ divides $\alpha_i+\beta_i$} \\
0 & \text{otherwise}
\end{array}\right.
$$

We claim that exactly half of the $\varepsilon_i$ are equal to one. Indeed, $\zeta+\bar\zeta +2$ divides $\alpha_i+\beta_i$ if and only if $\beta_i/\alpha_i$ reduces to $-1$ modulo $\zeta+\bar\zeta +2$, and, according to Lemma~\ref{lem:unsurdeux}, exactly half of the elements of the orbit of $a$ have this property. 

Dividing both sides of \eqref{eq:curveSDN} by $(x+1)^{\#G/2}$, we obtain the equality
\begin{equation}
\label{eq:curveDN}
\prod_{i=1}^{\#\orb(a)} \frac{(\alpha_i x-\beta_i)}{(x+1)^{\varepsilon_i}} = - y^n\prod_{j=1}^{\#\orb(0)-2} \varphi_j^\omega
\end{equation}

We denote by $\psi_i$ the functions
$$
\psi_i:=\frac{(\alpha_i x-\beta_i)}{(x+1)^{\varepsilon_i}} \qquad \text{for}~ i=1,\dots,\#\orb(a).
$$

Then the values of these functions at $P_y$ are algebraic integers: if $\varepsilon_i=0$ this is trivial; if $\varepsilon_i=1$ one can write
$$
\frac{(\alpha_i x-\beta_i)}{(x+1)} = \alpha_i - \frac{(\alpha_i + \beta_i)}{(x+1)}
$$
and the right-hand side is an algebraic integer because $x+1$ divides  $\zeta+\bar\zeta +2$ which itself divides  $\alpha_i+\beta_i$ under our assumptions.

The functions $\psi_i$ being defined for $r=4,6$, the equation \eqref{eq:curveDN} becomes
\begin{equation}
\label{eq:curveFN}
\prod_{i=1}^{\#\orb(a)} \psi_i = - y^n\prod_{j=1}^{\#\orb(0)-2} \varphi_j^\omega
\end{equation}

By construction, the $\varphi_j(P_y)$ are units and the $\psi_i(P_y)$ are algebraic integers. We shall prove that, under some additional condition on $y$, the $\psi_i(P_y)$ are coprime. If this holds then, by unique factorization into prime ideals in the ring of integers of $\Q(P_y)$, since the valuations of the right-hand side are multiples of $n$, the same holds for each of the factors $\psi_i(P_y)$ of the left-hand side, in other terms, the Selmer property is satisfied by the $\psi_i$.

If $r=3$, we let $\psi_i=\alpha_i x-\beta_i$ and consider the equation \eqref{eq:curveSDN} instead, which reads
\begin{equation}
\label{eq:curveFN3}
\prod_{i=1}^{\#\orb(a)} \psi_i = - y^n \varphi_1^\omega\varphi_2^\omega
\end{equation}
The same reasoning as above applies to this situation as well.

\begin{lemma}
\label{lem:pgcd}
Let $(\alpha,\beta)$ and $(\gamma,\delta)$ be two distinct pairs of coprime integers in $\N^*\times \Z$. Then for any algebraic integer $x$, the ideal $(\alpha x -\beta, \gamma x -\delta)$ divides
$$
\frac{\alpha\delta-\beta\gamma}{\gcd(\alpha,\gamma)}.
$$
\end{lemma}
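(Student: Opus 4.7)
The plan is to eliminate $x$ from the two generators by an explicit integer linear combination. Set $d:=\gcd(\alpha,\gamma)$ and write $\alpha=d\alpha'$, $\gamma=d\gamma'$ with $\gcd(\alpha',\gamma')=1$. Then the combination
$$
\gamma'(\alpha x-\beta)\,-\,\alpha'(\gamma x-\delta)
$$
has integer coefficients, vanishing $x$-part (since $\gamma'\alpha-\alpha'\gamma=d(\gamma'\alpha'-\alpha'\gamma')=0$), and constant term $\alpha'\delta-\gamma'\beta=(\alpha\delta-\beta\gamma)/d$. In particular this rational number is an integer, and it lies in the ideal $(\alpha x-\beta,\gamma x-\delta)$ of the ring of integers of $\Q(x)$.

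Since for a nonzero integer $N$, an ideal $I$ of a ring of integers divides $N$ precisely when $N\in I$, we conclude that $(\alpha x-\beta,\gamma x-\delta)$ divides $(\alpha\delta-\beta\gamma)/\gcd(\alpha,\gamma)$. Note that the hypothesis that the two pairs are distinct (together with coprimality and the convention $\alpha,\gamma>0$) guarantees that the fractions $\beta/\alpha$ and $\delta/\gamma$ are distinct, so $\alpha\delta-\beta\gamma\neq 0$ and the divisibility statement is nontrivial; otherwise the claim is vacuous.

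There is no real obstacle here: the only thing to check beyond the one-line Bezout-style elimination is that the resulting constant is an integer, which is automatic from the rewritten form $\alpha'\delta-\gamma'\beta$ with $\alpha',\gamma'\in\Z$.
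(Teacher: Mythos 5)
Your proof is correct and is essentially identical to the paper's: both use the Bézout-style elimination $\frac{\gamma}{\gcd(\alpha,\gamma)}(\alpha x-\beta)-\frac{\alpha}{\gcd(\alpha,\gamma)}(\gamma x-\delta)=\frac{\alpha\delta-\beta\gamma}{\gcd(\alpha,\gamma)}$ to exhibit the quantity as an element of the ideal. The extra remarks about integrality and nonvanishing of $\alpha\delta-\beta\gamma$ are fine but not needed beyond what the identity already shows.
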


\begin{proof}
The identity
$$
\frac{\gamma}{\gcd(\alpha,\gamma)}(\alpha x -\beta) - \frac{\alpha}{\gcd(\alpha,\gamma)}(\gamma x -\delta) = \frac{\alpha\delta-\beta\gamma}{\gcd(\alpha,\gamma)}
$$
proves that the desired quantity belongs to the ideal $(\alpha x -\beta, \gamma x -\delta)$, hence the result.
\end{proof}

\begin{remark}
The above Lemma is generically optimal in the sense that, under its assumptions, the ideal $(\alpha X -\beta, \gamma X -\delta)$ of the polynomial ring $\Z[X]$ satisfies
$$
(\alpha X -\beta, \gamma X -\delta) \cap \Z = \left(\frac{\alpha\delta-\beta\gamma}{\gcd(\alpha,\gamma)}\right).
$$
\end{remark}

So, we require $y$ to be coprime to all the quantities obtained from the above lemma:

\begin{assumption2}
The integer $y$ is coprime to
\begin{equation}
\label{eq:Ycoprimeto}
\prod_{i\neq j}\frac{\alpha_i\beta_j-\beta_i\alpha_j}{\gcd(\alpha_i,\alpha_j)}
\end{equation}
where $\beta_i/\alpha_i$ run through $\orb(a)$.
\end{assumption2}

At the end of the day, we obtain a list of rational functions $\varphi_j$, $\psi_i$, of the form
$$
\alpha x - \beta\quad \text{or}\quad \frac{\alpha x - \beta}{x+1}
$$
where $\alpha, \beta$ are coprime integers such that $\beta/\alpha$ belongs to $\orb(a) \cup \orb(0)$. If $r=4,6$, the number of these functions is $\#\orb(a)+\#\orb(0)-2$, and if $r=3$, the number of functions is $\#\orb(a)+2=\#\orb(a)+\#\orb(0)-1$.

In both cases, these functions are not multiplicatively independent, because they satisfy the non-trivial relation \eqref{eq:curveFN} (resp. \eqref{eq:curveFN3}), so in order to find an independent set we remove one of these functions from the list. If $r=3$, we obtain $\#\orb(a)+\#\orb(0)-2$ independent functions, which is the required number (it coincides with the $n$-rank of the Picard group). If $r=4,6$, in order to complete the list we add a last function
\begin{equation}
\label{eq:lastfunction}
\frac{(x+1)^2}{\zeta+\bar\zeta+2}
\end{equation}
which is multiplicatively independent from the previous ones.

\begin{lemma}
For $r=4,6$, the value of \eqref{eq:lastfunction} at $P_y$ in an algebraic number of norm $\pm 1$.
\end{lemma}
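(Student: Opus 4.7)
The plan is to compute the absolute norm $N_{\mathbb{Q}(P_y)/\mathbb{Q}}((x+1)^2/(\zeta+\bar\zeta+2))$ directly and show it equals $\pm 1$. Since $\zeta+\bar\zeta+2 \in \mathbb{Q}$ and $[\mathbb{Q}(P_y):\mathbb{Q}]=\#G$, pulling the denominator out of the norm reduces the task to proving the identity
$$
N_{\mathbb{Q}(P_y)/\mathbb{Q}}(x+1)^{2}=\pm(\zeta+\bar\zeta+2)^{\#G}.
$$

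By Corollary~\ref{cor:galoispol}, the Galois group of $\mathbb{Q}(P_y)/\mathbb{Q}$ is $G$, acting on $x=x(P_y)$ via the natural action on $\PP^1$. Hence
$$
N_{\mathbb{Q}(P_y)/\mathbb{Q}}(x+1)=\prod_{\sigma\in G}(\sigma(x)+1),
$$
and this is precisely the constant computed in Lemma~\ref{lem:IDTS}: it equals $(\zeta+1)^{r}$ when $G$ is cyclic of order $r$, and $(\zeta+1)^{2r}$ when $G$ is dihedral of order $2r$.

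The main step is then the purely trigonometric identity
$$
(\zeta+1)^{2}=\zeta\,(\zeta+\bar\zeta+2),
$$
which follows at once from $\zeta\bar\zeta=1$. Raising both sides to the $r$-th power and using $\zeta^{r}=1$ gives $(\zeta+1)^{2r}=(\zeta+\bar\zeta+2)^{r}$. Squaring the expression for $N(x+1)$ in the cyclic case yields $(\zeta+1)^{2r}=(\zeta+\bar\zeta+2)^{r}=(\zeta+\bar\zeta+2)^{\#G}$, while in the dihedral case squaring yields $(\zeta+1)^{4r}=(\zeta+\bar\zeta+2)^{2r}=(\zeta+\bar\zeta+2)^{\#G}$. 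In both cases the norm of $(x+1)^{2}/(\zeta+\bar\zeta+2)$ is exactly $+1$.

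I do not expect a serious obstacle: the only thing to verify is that the constant produced by Lemma~\ref{lem:IDTS} has the right shape, and the algebraic identity $(\zeta+1)^{2}=\zeta(\zeta+\bar\zeta+2)$ with $\zeta^{r}=1$ does all the work. The slight care needed is bookkeeping between the cyclic and dihedral cases, since $\#G$ is $r$ in one case and $2r$ in the other; but since the formulas of Lemma~\ref{lem:IDTS} differ by exactly the same factor of $2$ in the exponent, the final equation $N((x+1)^{2})=(\zeta+\bar\zeta+2)^{\#G}$ holds uniformly.
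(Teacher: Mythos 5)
Your proof is correct and takes essentially the same route as the paper's: both reduce the claim to the evaluation $\prod_{\sigma\in G}(\sigma(x)+1)=(\zeta+1)^{\#G}$ from Lemma~\ref{lem:IDTS} (the paper reads this off as the constant coefficient of the minimal polynomial $Q(T-1)$, you as the product of Galois conjugates via Corollary~\ref{cor:galoispol}) and then conclude with the identity $(\zeta+1)^2=\zeta(\zeta+\bar\zeta+2)$.
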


\begin{proof}
Let $Q$ be the polynomial \eqref{eq:mypoly}. Then the minimal polynomial of $x+1$ is $Q(T-1)$, and its constant coefficient $Q(-1)=(-1)^{\#G}\prod_{\sigma\in G} (\sigma(a)+1)$ is equal (up to sing) to the norm of $x+1$. According to \eqref{eq:IDTS}, this quantity is $(\zeta+1)^{\#G}$. Assume $G$ is cyclic, then we have
$$
(\zeta+1)^r = ((\zeta+1)^2)^{r/2} = \zeta^{r/2}(\zeta+\bar\zeta+2)^{r/2} =-(\zeta+\bar\zeta+2)^{r/2},
$$
from which one deduces that $(x+1)^2/(\zeta+\bar\zeta+2)$ has norm $\pm 1$, since $Q$ has degree $r$. The dihedral case can be proved along the same lines.
\end{proof}

Under assumption~(A1) the value of this last function \eqref{eq:lastfunction} at $P_y$ need not be a unit, although its minimal polynomial has coefficients in $\Z[(\zeta+\bar\zeta+2)^{-1}]$. We shall numerically check that, under an additional congruence condition on $y^n$, this polynomial has integral coefficients, so the value of \eqref{eq:lastfunction} at $P_y$ is a unit, hence defines a class in the Selmer group.

\subsubsection{Determination of the congruence condition}
\label{3.2.5}

In order to complete our proof, we shall explicitly determine, in each case, conditions on the integer $y$ of the form:
\begin{enumerate}
\item[(C1)] $y^n\equiv t_0\pmod{m}$, where $m=(\zeta+\bar\zeta+2)^v\prod_i \alpha_i$ for some $v\geq 0$.
\item[(C2)] $y$ is coprime to $p_1\cdots p_l$, where the $p_i$ are the prime factors of \eqref{eq:Ycoprimeto} which do not divide $m$.
\end{enumerate}

These conditions are chosen in such a way that (C1) and (C2) imply both (A1) and (A2), and additionally that the minimal polynomial of \eqref{eq:lastfunction} has integral coefficients.

In practice, the integers $m$ and $t_0$ in condition~(C1) are determined by a case-by-case computation, with the help of Pari/GP: we start from the congruence condition modulo $\prod_i \alpha_i$ given by Lemma~\ref{lem:CongCond}, which implies (A1). Then we compute the minimal polynomial of \eqref{eq:lastfunction} and require it to have integral coefficients. This gives us a slightly stronger condition modulo $(\zeta+\bar\zeta+2)^v\prod_i \alpha_i$ for some $v\geq 0$.

Next, we observe that $t_0$ is coprime to $m$ and to $p_1\cdots p_l$, in particular (C1) and (C2) imply that $y^n$ is coprime to \eqref{eq:Ycoprimeto}, so that assumption~(A2) is satisfied.

It follows from Bézout's identity that the equation $y^n\equiv t_0\pmod{m}$ has a solution if and only if the order of $t_0$ in $(\Z/m\Z)^\times$ is coprime to $n$. So, a first step is to assume that $n$ satisfies this condition. In all our examples, we tried to find the weakest possible condition on $n$, like $n$ is coprime to $2$, to $3$, or to $6$ depending on the cases. There does not seem to exist a direct relation between the order of $G$ and the condition on $n$.

Once the condition on $n$ is found, we describe explicitly the solutions of the equation $y^n\equiv t_0\pmod{m}$, which yields congruence conditions of the form $y\equiv y_0 \pmod{m}$, where $y_0$ is a suitable power of $t_0$, depending on the class of $n$ modulo the multiplicative order of $t_0$ in $(\Z/m\Z)^\times$.

Finally, since $t_0$ is coprime to $m$ and to $p_1\cdots p_l$, so does $y_0$, and the condition
$$
y\equiv y_0 \pmod{mp_1\cdots p_l}
$$
implies that both (C1) and (C2) above hold (it is in fact much stronger).
As explained above, this yields a map $\phi$ satisfying the hypotheses of Theorem~\ref{thm:specialization}.  We shall not exhibit such a map, but its existence allows us to make use of Theorem~\ref{thm:specialization}.

\begin{remark}
Although we believe it should exist, we have not been able to prove the existence of conditions similar to (C1) and (C2) in full generality.
\end{remark}

\subsubsection{Counting the fields obtained}
\label{3.2.6}

Given a curve as in Lemma~\ref{lem:Picard}, the zeroes of the map $y$ (hence those of the map $\phi$ whose existence is granted by the discussion at the end of \S{}\ref{3.2.5}) are in bijection with $\orb(a)$. Therefore, if $\#\orb(a)\geq 3$, then by the quantitative version of Theorem~\ref{thm:specialization} the number of isomorphism classes of fields in $\{ \Q(P_1),\dots, \Q(P_N) \}$ is $\gg N$. Since $\#\orb(a)=\#G$, this is the case for all our groups, except $\Z/2\Z$, which for simplicity we exclude from this discussion.

Finally, we note that the discriminant of our polynomial \eqref{eq:mypoly} is $O(y^{ln})$ for some integer $l$.
 Needless to say, the same holds for the discriminant of the field $\Q(P_y)$. It follows that, for sufficiently large positive $X$, the number of isomorphism classes of fields $\Q(P_y)$ satisfying $\rank_n \Cl(\Q(P_y)) \geq \#G - 1$ and such that $|\Disc(\Q(P_y)/\Q)|\leq X$ is $\gg X^{1/ln}$.


\section{Case-by-case proof of Theorem~\ref{thm:main}}



\subsection{$\Z/2\Z$}

We shall recover the result of Yamamoto \cite{Yamamoto70} and Weinberger \cite{Weinberger73} on real quadratic fields with $n$-rank at least one. We note that our family is not the same as theirs.
         
The homography $z\mapsto 1/z$ has order $2$; choosing $a=2$, $b=0$ and $\lambda=-1/2$ as in \S{}\ref{3.2.3} we obtain from Lemma~\ref{lem:Picard} the polynomial
\begin{equation*}
\label{eq:C2pol}
\begin{split}
C_2P &= \frac{1}{2}\left((x-2)(2x-1)+y^nx\right) \\
     &= x^2 + \left(\frac{y^n - 5}{2}\right)x + 1.
\end{split}
\end{equation*}

Let $y\in \Z$, and let $P_y$ be the corresponding point on the curve defined by $C_2P$.

\subsubsection*{Congruence conditions}
\begin{enumerate}
\item[(i)] $2\nmid y$ and $3\nmid y$.
\end{enumerate}

We claim that, under these conditions, the values at $P_y$ of the multiplicatively independent rational functions
\begin{equation*}
\label{eq:SelmerlistC2}
x, \quad x-2,
\end{equation*}
define classes in $\Sel^n(\Q(P_y))$. In other terms, the condition \eqref{eq:SelCond} in Theorem~\ref{thm:specialization} is satisfied by these rational maps and this choice of $P_y$.

Abusing notation, we simply denote by $x$ the value $x(P_y)$ in the proof below. The integer $y$ being odd, we deduce that $y^n$ is odd, hence the polynomial $C_2P$ has integral coefficients. It follows that $x$ is a unit, hence defines an element in the Selmer group.

On the other hand, we have by construction
$$
(x-2)(2x-1) = -y^nx.
$$

We know that $y$ is coprime to $3$ and that $x$ is a unit, so the right-hand side is coprime to $3$. It follows that the two terms on the left-hand side are also coprime to $3$, hence are coprime because $(x-2,2x-1)$ divides $3$ by Lemma~\ref{lem:pgcd}. Therefore, $x-2$ and $2x-1$ are both $n$-th powers of ideals, according to the identity above.

\subsubsection*{Signature}

The discriminant of $C_2P$ is $(y^n-9)(y^n-1)/4$, which is strictly positive unless $1\leq y^n\leq 9$. So, if $y$ is large, the field $\Q(P_y)$ is totally real, with discriminant $O(y^{2n})$.

\subsubsection*{Statement of the result}

According to Theorem~\ref{thm:specialization}, for all but $O(\sqrt{N})$ values $y\in\{1,\dots, N\}$ satisfying the conditions above, $\Q(P_y)$ is a real quadratic field, and
$$
\rank_n \Cl(\Q(P_y))\geq 1.
$$

Moreover, for $N$ large enough, the number of isomorphism classes of such $\Q(P_y)$ when $y$ runs through $\{1,\dots, N\}$ is $\gg N/\log N$.
It follows that, for $X$ large enough, the number of such fields whose discriminant is bounded above by $X$ is $\gg X^{1/2n}/\log X$.

%
%
%
%


\subsection{$\Z/3\Z$}

We shall recover the result of Nakano \cite{Nakano86} on cyclic cubic fields with $n$-rank at least two. Our family is the same as his. In fact, it is constructed from the universal family of cyclic cubic fields given by Shanks \cite{Shanks1974}, known as the ``simplest cubic fields''.

The homography
$$
z\mapsto -1/(z+1)
$$
has order $3$. The orbits of $0$ and $1$ are respectively given by
$$
\orb(0)=\{0, -1, \infty\}; \quad \orb(1)=\{1, -1/2, -2\}.
$$

The corresponding polynomial is
\begin{equation*}
\label{eq:C3pol}
\begin{split}
C_3P &= \frac{1}{2}\left((x-1)(x+2)(2x+1) + y^nx(x+1)\right) \\
     &= x^3 + \left(\frac{y^n + 3}{2}\right)x^2 + \left(\frac{y^n - 3}{2}\right)x - 1.
\end{split}
\end{equation*}

Let $y\in \Z$, and let $P_y$ be the corresponding point on the curve defined by $C_3P$.

\subsubsection*{Congruence conditions}
\begin{enumerate}
\item[(i)] $2\nmid y$ and $3\nmid y$.
\end{enumerate}

One checks that, under these conditions, assumptions (A1) and (A2) are satisfied. It follows from the arguments in \S{}\ref{3.2.4} that the values at $P_y$ of the rational functions
\begin{equation*}
\label{eq:SelmerlistC3}
x, \quad x+1, \quad x-1, \quad x+2
\end{equation*}
define classes in $\Sel^n(\Q(P_y))$.

\subsubsection*{Signature}

The polynomial $C_3P$ has degree three, hence has at least one real root. Therefore, according to Remark~\ref{rmk:signature}, the field $\Q(P_y)$ is totally real, with discriminant $O(y^{4n})$.

\subsubsection*{Statement of the result}

According to Theorem~\ref{thm:specialization}, for all but $O(\sqrt{N})$ values $y\in\{1,\dots, N\}$ satisfying the conditions above, $\Q(P_y)$ is a cyclic cubic field, and
$$
\rank_n \Cl(\Q(P_y))\geq 2.
$$

Moreover, for $N$ large enough, the number of isomorphism classes of such $\Q(P_y)$ when $y$ runs through $\{1,\dots, N\}$ is $\gg N$.
It follows that, for $X$ large enough, the number of such fields whose discriminant is bounded above by $X$ is $\gg X^{1/4n}$.

%
%
%
%


\subsection{$\Z/4\Z$}

The results obtained in this section appear to be new. The family of cyclic quartic fields we consider is a subfamily of the one constructed by Gras \cite{Gras78}, also known as the ``simplest quartic fields'' by analogy with Shank's terminology.

The homography
$$
z\mapsto (z-1)/(z+1)
$$
has order $4$. The orbits of $0$ and $2$ are respectively given by
$$
\orb(0)=\{0, -1, \infty, 1\}; \quad \orb(2)=\{2, 1/3, -1/2, -3\}.
$$

The corresponding polynomial is
\begin{equation*}
\label{eq:C4pol}
\begin{split}
C_4P &= \frac{1}{6}\left((x-2)(2x+1)(x+3)(3x-1) + y^nx(x-1)(x+1)\right) \\
     &= x^4 + \left(\frac{y^n+7}{6}\right)x^3 - 6x^2 - \left(\frac{y^n+7}{6}\right)x + 1.
\end{split}
\end{equation*}

Let $y\in \Z$, and let $P_y$ be the corresponding point on the curve defined by $C_4P$.

\subsubsection*{Congruence conditions}
\begin{enumerate}
\item[(i)] $n$ is odd;
\item[(ii)] $y\equiv 5\pmod{12}$;
\item[(iii)] $5\nmid y$.   
\end{enumerate}

We claim that, under these conditions, the values at $P_y$ of the rational functions
\begin{equation*}
\label{eq:SelmerlistC4}
x, \quad \frac{x-1}{x+1}, \quad \frac{x+3}{x+1}, \quad x-2, \quad 2x+1, \quad \frac{(x+1)^2}{2},
\end{equation*}
define classes in $\Sel^n(\Q(P_y))$.

Condition (i) and (ii) imply that $y^n \equiv 5\pmod{12}$, in particular the polynomial $C_4P$ has integral coefficients, hence (A1) holds. Conditions (ii) and (iii) imply that $y$ is coprime to $2$ and $5$, hence (A2) holds. It follows from the arguments in \S{}\ref{3.2.4} that all the functions in the above list, except the last one, have the required property.

Let $q:=\frac{y^n-5}{12}$, which is an integer under our conditions. Then one computes, using Pari/GP, that the minimal polynomial of $(x(P_y)+1)^2/2$ is given by
$$
T^4 - 2(q^2 + q + 4)T^3 + (5q^2 + 10q + 19)T^2 - 2(q^2 +3q + 4)T + 1
$$
which has integral coefficients. Therefore $(x(P_y)+1)^2/2$ is a unit, hence the last function in the list satisfies the property.

\subsubsection*{Signature}

The polynomial $C_4P$ satisfies $C_4P(0)=1$ and $C_4P(1)=-4$, hence has at least one real root. Therefore, according to Remark~\ref{rmk:signature}, the field $\Q(P_y)$ is totally real, with discriminant $O(y^{6n})$.

\subsubsection*{Statement of the result}

Assume $n$ is odd. Then for all but $O(\sqrt{N})$ values $y\in\{1,\dots, N\}$ satisfying the conditions above, $\Q(P_y)$ is a real cyclic quartic field, and
$$
\rank_n \Cl(\Q(P_y))\geq 3.
$$

Moreover, for $N$ large enough, the number of isomorphism classes of such $\Q(P_y)$ when $y$ runs through $\{1,\dots, N\}$ is $\gg N$.
It follows that, for $X$ large enough, the number of such fields whose discriminant is bounded above by $X$ is $\gg X^{1/6n}$.

%
%
%
%


\subsection{$\Z/6\Z$}

The results obtained in this section appear to be new. Our family of cyclic sextic fields is similar to that constructed by Gras \cite{Gras86}, although the cyclic subgroup of $\PGL_2(\Q)$ on which her construction relies on is not the same than ours.

The homography
$$
z\mapsto (2z-1)/(z+1)
$$
has order $6$. The orbits of $0$ and $3$ are respectively given by
$$
\orb(0)=\{0, -1, \infty, 1, 2, 1/2 \}; \quad \orb(3)=\{3, 5/4, 2/3, 1/5, -1/2, -4\}.
$$

The corresponding polynomial is
\begin{equation*}
\label{eq:C6pol}
\begin{split}
C_6P &= \frac{1}{120}\left((x-3)(x+4)(2x+1)(3x-2)(4x-5)(5x-1) + y^nx(x-1)(x+1)(x-2)(2x-1)\right) \\
     &= x^6 + \left(\frac{y^n-37}{60}\right)x^5 - \left(\frac{y^n+323}{24}\right)x^4 + 20x^3 + \left(\frac{y^n-37}{60}\right)x^2 - \left(\frac{y^n+323}{24}\right)x + 1.
\end{split}
\end{equation*}

Let $y\in \Z$, and let $P_y$ be the corresponding point on the curve defined by $C_6P$.

\subsubsection*{Congruence conditions}
\begin{enumerate}
\item[(i)] $n$ is coprime to $6$;
\item[(ii)] $y\equiv 397\pmod{1080}$ if $n=12k+1$, $y\equiv 37 \pmod{1080}$ if $n=12k+5$, $y\equiv 613\pmod{1080}$ if $n=12k+7$ and $y\equiv 253\pmod{1080}$ if $n=12k+11$;
\item[(iii)] $7\nmid y$.  
\end{enumerate}

We claim that, under these conditions, the values at $P_y$ of the rational functions
\begin{equation*}
\label{eq:SelmerlistC6}
x, \quad x-1, \quad \frac{2x-1}{x+1}, \quad \frac{x-2}{x+1}, \\
\quad x-3, \quad \frac{x+4}{x+1}, \quad 2x+1, \quad 3x-2, \quad \frac{4x-5}{x+1}, \quad \frac{(x+1)^2}{3}
\end{equation*}
define classes in $\Sel^n(\Q(P_y))$.

Condition (ii) implies that $y^n\equiv 397\pmod{1080}$, in particular the polynomial $C_6P$ has integral coefficients, hence (A1) holds. Condition (ii) and (iii) imply that $y$ is coprime to $2$, $3$ and $7$, hence (A2) holds. It follows from the arguments in \S{}\ref{3.2.4} that all the functions in the above list, except the last one, have the required property.

Let $q:=\frac{y^n-397}{1080}$, which is an integer under our conditions. Then one computes, using Pari/GP, that the minimal polynomial of $(x(P_y)+1)^2/3$ is given by
\begin{multline*}
T^6 - 6(18q^2 + 15q + 5)T^5 + 15(39q^2 + 36q + 11)T^4 - 2(483q^2 + 483q + 151)T^3 \\ + 15(39q^2 + 42q + 14)T^2 - 6(18q^2 + 21q + 8)T + 1
\end{multline*}
which has integral coefficients. Therefore $(x(P_y)+1)^2/2$ is a unit, hence the last function in the list satisfies the property.

\subsubsection*{Signature}

The polynomial $C_6P$ satisfies $C_6P(0)=1$ and $C_6P(-1)=-27$, hence has at least one real root. Therefore, according to Remark~\ref{rmk:signature}, the field $\Q(P_y)$ is totally real, with discriminant $O(y^{10n})$.

\subsubsection*{Statement of the result}

Assume $n$ is coprime to $6$. Then for all but $O(\sqrt{N})$ values $y\in\{1,\dots, N\}$ satisfying the conditions above, $\Q(P_y)$ is a real cyclic sextic field, and
$$
\rank_n \Cl(\Q(P_y))\geq 5.
$$

Moreover, for $N$ large enough, the number of isomorphism classes of such $\Q(P_y)$ when $y$ runs through $\{1,\dots, N\}$ is $\gg N$.
It follows that, for $X$ large enough, the number of such fields whose discriminant is bounded above by $X$ is $\gg X^{1/10n}$.

%
%
%
%


\subsection{$D_2=(\Z/2\Z)^2$}

We shall construct, for any $n>1$, real biquadratic fields whose class group has $n$-rank at least three. A detailed analysis reveals that each of the three quadratic subfields has at least one element of order $n$ in its class group. While these examples appear to be new, it follows from Yamamoto's result that one can achieve $n$-rank at least $4$ in the imaginary case.

Let $G\simeq (\Z/2\Z)^2$ be the subgroup of $\PGL_2(\Q)$ generated by the homographies
$$
z \mapsto (z+1)/(z-1) \quad\text{and}\quad z \mapsto -1/z
$$
Choosing $a=2$, $b=0$ and $\lambda$ as in \S{}\ref{3.2.3}, we obtain from Lemma~\ref{lem:Picard} the polynomial
\begin{equation*}
\label{eq:D2pol}
\begin{split}
D_2P &= \frac{1}{6}\left((x-2)(2x+1)(x-3)(3x+1)+y^nx(x-1)(x+1)\right) \\
     &= x^4 + \left(\frac{y^n-25}{6}\right)x^3 + 2x^2 - \left(\frac{y^n-25}{6}\right)x + 1.
\end{split}
\end{equation*}

Let $y\in \Z$, and let $P_y$ be the corresponding point on the curve defined by $D_2P$.

\subsubsection*{Congruence conditions}
\begin{enumerate}
\item[(i)] $y\equiv 1 \pmod{12}$.
\item[(ii)] $5\nmid y$ and $7\nmid y$.  
\end{enumerate}

One checks that, under these conditions, the values at $P_y$ of the rational functions
\begin{equation*}
\label{eq:SelmerlistD2}
x, \quad \frac{x+1}{x-1}, \quad x-2, \quad 2x+1, \quad \frac{x-3}{x-1}, \quad \frac{(x-1)^2}{2},
\end{equation*}
define classes in $\Sel^n(\Q(P_y))$. In other terms, the condition \eqref{eq:SelCond} in Theorem~\ref{thm:specialization} is satisfied by these rational maps and this choice of $P_y$.

\subsubsection*{Signature}

The polynomial $D_2P$ satisfies $D_2P(0)=1$ and $D_2P(1/2)=(-y^n+50)/16$, hence has at least one real root when $y^n>50$. Therefore, the field $\Q(P_y)$ is totally real for $y$ large enough.

\subsubsection*{Statement of the result}

According to Theorem~\ref{thm:specialization}, for all but $O(\sqrt{N})$ values $y\in\{1,\dots, N\}$ satisfying the conditions above, $\Q(P_y)$ is a real biquadratic field, and
$$
\rank_n \Cl(\Q(P_y))\geq 3.
$$

%

\begin{remark}
Polynomials defining the three quadratic subfields are given by
\begin{equation*}
\begin{split}
x^2 + \left(\frac{y^n-25}{6}\right)x + 4 &= \frac{1}{6}\left((2x - 3)(3x - 8) + y^nx\right); \\
x^2 + \left(\frac{y^n-25}{6}\right)x + \left(\frac{y^n-25}{6}\right) &= \frac{1}{6}\left((x - 5)(6x + 5) + y^n(x+1)\right); \\
x^2 + \left(\frac{y^n-25}{6}\right)x - \left(\frac{y^n-25}{6}\right) &= \frac{1}{6}\left((2x - 5)(3x - 5) + y^n(x-1)\right);
\end{split}
\end{equation*}
and one checks that, under the above assumptions, each of these fields has at least one element of order $n$ in its class group. When $n$ is odd, this allows to recover the above result via Brauer's class number relations \cite{Walter79a}.
\end{remark}


%
%
%
%


\subsection{$D_3=\mathfrak{S}_3$}
\label{sec:D3}

The subgroup generated by
$$
z\mapsto -1/(z+1) \quad \text{and}\quad z\mapsto 1/z
$$
is isomorphic to $D_3=\mathfrak{S}_3$. The orbits of $0$ and $2$ are respectively given by
$$
\orb(0)=\{0, -1, \infty \}; \quad \orb(2)=\{2, -1/3, -3/2, 1/2, -3, -2/3\}.
$$

The corresponding polynomial is
\begin{equation*}
\label{eq:D3pol}
\begin{split}
D_3P &= \frac{1}{36}\left((x-2)(2x-1)(x+3)(3x+1)(2x+3)(3x+2) + y^nx^2(x+1)^2\right) \\
     &= x^6 + 3x^5 + \left(\frac{y^n-127}{36}\right)x^4 + \left(\frac{y^n-217}{18}\right)x^3 + \left(\frac{y^n-127}{36}\right)x^2 + 3x + 1.
\end{split}
\end{equation*}

Let $y\in \Z$, and let $P_y$ be the corresponding point on the curve defined by $D_3P$.

\subsubsection*{Congruence conditions}
\begin{enumerate}
\item[(i)] $n$ is odd;
\item[(ii)] $y\equiv 19\pmod{36}$;
\item[(iii)] $5\nmid y$ and $7\nmid y$.  
\end{enumerate}

Under these conditions, the values at $P_y$ of the rational functions
\begin{equation*}
\label{eq:SelmerlistD3}
x, \quad x+1, \quad x-2, \quad x+3, \quad 2x-1, \quad 2x+3, \quad 3x+1
\end{equation*}
define classes in $\Sel^n(\Q(P_y))$.

\subsubsection*{Signature}

The polynomial $D_3P$ can be written as
$$
D_3P = P_1 + y^n P_2.
$$

We note that $P_1$ is monic of degree $6$, hence has a global minimum over $\mathbb{R}$; that $P_2$ takes strictly positive values over $\mathbb{R}$, except at its roots $0$ and $-1$; and that $P_1(0)$ and $P_1(-1)$ are strictly positive. By elementary calculus it follows from these observations that, for $y^n$ large enough, $D_3P$ takes strictly positive values over $\mathbb{R}$, hence the field $\Q(P_y)$ is totally imaginary.
A detailed analysis reveals that, for $y^n>100$, $D_3P$ takes strictly positive values.

\subsubsection*{Statement of the result}

Assume $n$ is odd. Then for all but $O(\sqrt{N})$ values $y\in\{1,\dots, N\}$ satisfying the conditions above, $\Q(P_y)$ is a totally imaginary Galois extension of $\Q$ with group $\mathfrak{S}_3$, and
$$
\rank_n \Cl(\Q(P_y))\geq 5.
$$

\begin{remark}
The quadratic subfield is defined by the polynomial
$$
x^2 + 3x + \left(\frac{y^n-19}{36}\right) = \frac{1}{36}\left((6x-1)(6x+19) + y^n\right).
$$
The three conjugate cubic subfields are defined by
$$
x^3 + 3x^2 + \left(\frac{y^n-235}{36}\right)x + \left(\frac{y^n-325}{18}\right) = \frac{1}{36}\left((2x-5)(3x+10)(6x+13) + y^n(x+2)\right).
$$

One checks that, under the above assumptions, the class group of the quadratic (resp. cubic) subfield has $n$-rank at least one (resp. two).  When $n$ is coprime to $6$, this allows to recover the above result via Brauer's class number relations \cite{Walter79a}.
\end{remark}

\begin{remark}
The above factorizations are particular instances of a general phenomenon: let $f(z)=-1/(z+1)$, and let $a\in\Q$ which does not belong to the orbit of $0$, then the splitting field of the polynomial
$$
\prod_{i=1}^3 \left(x-f^i(a)\right)\left(x-\frac{1}{f^i(a)}\right) - tx^2(x+1)^2
$$
is Galois over $\Q(t)$ with group $\mathfrak{S}_3$, according to our construction. Its quadratic subfield is defined by the polynomial
$$
\left(x-(a+f(a)+f^2(a))\right)\left(x-\left(\frac{1}{a}+\frac{1}{f(a)}+\frac{1}{f^2(a)}\right)\right) - t,
$$
and the cubic subfields are defined by
$$
\prod_{i=1}^3 \left(x-\left(f^i(a)+\frac{1}{f^i(a)}\right)\right) - t(x+2).
$$

Needless to say, this can be generalized to arbitrary values of $a$, $b$ and $r$ (the order of $f$).
\end{remark}

\begin{example}
Let $n=5$ and $y=199$, then our field is defined by the polynomial
$$
x^6 + 3x^5 + 8668877802x^4 + 17337755599x^3 + 8668877802x^2 + 3x + 1
$$

One checks using Pari/GP that its quadratic subfield has class group of $5$-rank $2$, and that its cubic subfields have $5$-rank $2$ each. It follows from Brauer's class number relations that the class group of this field has $5$-rank $6$. In view of our numerical experiments, it seems that there should exist infinitely many such cases in our family.
\end{example}

%
%
%
%


\subsection{$D_4$}

The subgroup generated by
$$
z\mapsto (z-1)/(z+1) \quad \text{and}\quad z\mapsto 1/z
$$
is isomorphic to $D_4$. The orbits of $0$ and $2$ are respectively given by
$$
\orb(0)=\{0, -1, \infty, 1 \}; \quad \orb(2)=\{2, 1/3, -1/2, -3, 1/2, 3, -2, -1/3\}.
$$

The corresponding polynomial is
\begin{equation*}
\label{eq:D4pol}
\begin{split}
D_4P &= \frac{1}{36}\left((x-2)(2x-1)(x+2)(2x+1)(x-3)(3x-1)(x+3)(3x+1) + y^nx^2(x-1)^2(x+1)^2\right) \\
     &= x^8 + \left(\frac{y^n - 481}{36}\right)x^6 - \left(\frac{y^n - 733}{18}\right)x^4 + \left(\frac{y^n - 481}{36}\right)x^2 + 1.
\end{split}
\end{equation*}

Let $y\in \Z$, and let $P_y$ be the corresponding point on the curve defined by $D_4P$.

\subsubsection*{Congruence conditions}
\begin{enumerate}
\item[(i)] $n$ is coprime to $6$;
\item[(ii)] $y\equiv 49\pmod{144}$ if $n=3k+1$ and $y\equiv 97\pmod{144}$ if $n=3k+2$;
\item[(iii)] $5\nmid y$ and $7\nmid y$.  
\end{enumerate}

Under these conditions, the values at $P_y$ of the rational functions
\begin{equation*}
\label{eq:SelmerlistD4}
x, \quad \frac{x-1}{x+1}, \quad x-2, \quad x+2, \quad \frac{x-3}{x+1}, \quad \frac{x+3}{x+1}, \quad 2x-1, \quad 2x+1, \quad \frac{3x-1}{x+1}, \quad \frac{(x+1)^2}{2}
\end{equation*}
define classes in $\Sel^n(\Q(P_y))$.

\subsubsection*{Signature}

A detailed analysis, similar to the $D_3P$ case, reveals that, when $y^n>49$, the polynomial $D_4P$ takes strictly positive values over $\mathbb{R}$, hence the field $K$ is totally complex.

\subsubsection*{Statement of the result}

Assume $n$ is coprime to $6$. Then for all but $O(\sqrt{N})$ values $y\in\{1,\dots, N\}$ satisfying the conditions above, $\Q(P_y)$ is a totally complex Galois extension of $\Q$ with group $D_4$, and
$$
\rank_n \Cl(\Q(P_y))\geq 7.
$$

\begin{remark}
Replacing $y^n$ by $y^{2n}$ in the polynomial $D_4P$, one can prove similar result when $n$ is only assumed to be coprime to $3$. It suffices to  generalize our constructions, starting from Lemma~\ref{lem:Picard}, in order to take into account the square factors in the orbit at infinity.
\end{remark}

\begin{remark}
Choosing $a=4$ instead, one finds another congruence condition on $y^n$, which has a solution for all $n$ coprime to $5$. Therefore, the same result hold for $n$ coprime to $10$ (in fact, $5$ if one takes into account the previous remark).
\end{remark}

%
%
%
%


\subsection{$D_6$}

The subgroup generated by
$$
z\mapsto (2z-1)/(z+1) \quad \text{and}\quad z\mapsto 1/z
$$
is isomorphic to $D_6$. The orbits of $0$ and $-2$ are respectively given by
$$
\orb(0)=\{0, -1, \infty, 2, 1, 1/2\}; \quad \orb(-2)=\{-2, 5, 3/2, 4/5, 1/3, -1/4, \text{and their inverses}\}.
$$

The corresponding polynomial is
\begin{multline*}
\label{eq:D6pol}
D_6P = \frac{1}{120^2}((x+2)(2x+1)(x-3)(3x-1)(x+4)(4x+1)(x-5)(5x-1)(2x-3)(3x-2)(4x-5)(5x-4) \\ + y^nx^2(x-1)^2(x+1)^2(x-2)^2(2x-1)^2)
\end{multline*}

Let $y\in \Z$, and let $P_y$ be the corresponding point on the curve defined by $D_6P$.

\subsubsection*{Congruence conditions}
\begin{enumerate}
\item[(i)] $n$ is coprime to $6$;
\item[(ii)] $y^n\equiv 117649 \pmod{388800}$;
\item[(iii)] $7\nmid y$, $11\nmid y$ and $13\nmid y$. 
\end{enumerate}

Under these conditions, the values at $P_y$ of the rational functions
\begin{multline*}
x, \quad x-1, \quad \frac{2x-1}{x+1}, \quad \frac{x-2}{x+1}, \quad x+2, \quad \frac{x-5}{x+1}, \quad 2x-3, \quad \frac{5x-4}{x+1}, \quad 3x-1, \quad \frac{4x+1}{x+1},\\
2x+1, \quad \frac{5x-1}{x+1}, \quad 3x-2, \quad \frac{4x-5}{x+1}, \quad x-3, \quad \frac{(x+1)^2}{3}
\end{multline*}
define classes in $\Sel^n(\Q(P_y))$.

\subsubsection*{Signature}

As before, a detailed analysis reveals that, when $y^n>20449$, the polynomial $D_6P$ takes strictly positive values over $\mathbb{R}$, hence the field $K$ is totally complex.

\subsubsection*{Statement of the result}

Assume $n$ is coprime to $6$. Then for all but $O(\sqrt{N})$ values $y\in\{1,\dots, N\}$ satisfying the conditions above, $\Q(P_y)$ is a totally complex Galois extension of $\Q$ with group $D_6$, and
$$
\rank_n \Cl(\Q(P_y))\geq 11.
$$

%
%
%
%


\bibliographystyle{amsalpha}
\bibliography{PGL2biblio}



\bigskip

\textsc{Jean Gillibert}, Institut de Math{\'e}matiques de Toulouse, CNRS UMR 5219, 118 route de Narbonne, 31062 Toulouse Cedex 9, France.

\emph{E-mail address:} \texttt{jean.gillibert@math.univ-toulouse.fr}
\medskip

\textsc{Pierre Gillibert}, La Vieille \'Eglise, 14370 Vimont, France

\emph{Email address:} \texttt{pgillibert@yahoo.fr}


\end{document}